\newtheorem{corollary}{Corollary}[section]
\newtheorem{lemma}{Lemma}[section]
\newtheorem{theorem}{Theorem}[section]
\newenvironment{thmbis}[1]
 {%
 \addtocounter{theorem}{-1}%
 \begin{theorem}}
 {\end{theorem}}
\newtheorem{problem}{Problem}[section]
\newtheorem{construction}{Construction}[section]
\newtheorem{remark}{Remark}[section]
\newtheorem{conjecture}{Conjecture}[section]
\newtheorem{question}{Question}[section]
\begin{document}

\title{On set systems with strongly restricted intersections}
\author{Xin~Wei, Xiande~Zhang, and~Gennian~Ge
\thanks{\emph{2020 Mathematics Subject Classifications}: 05D05.}
 \thanks{X. Wei ({\tt weixinma@mail.ustc.edu.cn}) is with the School of Mathematical Sciences, University of Science and Technology of China, Hefei, 230026, Anhui, China.}
 \thanks{X. Zhang ({\tt drzhangx@ustc.edu.cn}) is with the School of Mathematical Sciences, University of Science and Technology of China, Hefei, 230026, Anhui, China, and  Hefei National Laboratory, Hefei, 230088, China.}
 \thanks{G. Ge ({\tt gnge@zju.edu.cn}) is with the School of Mathematical Sciences, Capital Normal University, Beijing, 100048, China.
}

}
\maketitle

\begin{abstract}
Set systems with strongly restricted intersections, called  $\alpha$-intersecting families for a vector $\alpha$, were introduced recently as a generalization of several well-studied intersecting families including the classical oddtown and eventown. Given a binary vector $\alpha=(a_1, \ldots, a_k)$, a collection $\mathcal F$ of subsets over an $n$ element set is an $\alpha$-intersecting family modulo $2$ if for each $i=1,2,\ldots,k$, all $i$-wise intersections of distinct members in $\mathcal F$ have sizes with the same parity as $a_i$. Let $f_\alpha(n)$ denote the maximum size of such a family. In this paper, we study the asymptotic behavior of $f_\alpha(n)$ when $n$ goes to infinity. We show that if $t$ is the maximum integer such that $a_t=1$ and $2t\leq k$, then $f_\alpha(n)\sim (t! n)^{1\slash t}$. More importantly, we show that for any constant $c$, as the length $k$ goes larger, $f_\alpha(n)$ is upper bounded by $O (n^c)$ for almost all $\alpha$. Equivalently, no matter what $k$ is, there are only finitely many $\alpha$ satisfying $f_\alpha(n)=\Omega (n^c)$.
 This answers an open problem raised by Johnston and O'Neill in 2023. All of our results can be generalized to modulo $p$ setting for any prime $p$ smoothly.

\end{abstract}

\begin{IEEEkeywords}
\boldmath oddtown, eventown, intersecting set families.
\end{IEEEkeywords}

\section{Introduction}

The classical oddtown and eventown problems concern maximizing the number of subsets of a finite set subject to certain parity-related constraints. Given a collection $\mathcal F$ of subsets of an $n$ element set, we say $\mathcal F$ follows eventown (resp., oddtown) rules if all its members have even (resp., odd) size, and the intersection of any two different members in $\mathcal F$ has even size. Berlekamp~\cite{berlekamp1969subsets}  and Graver~\cite{graver1975boolean} independently proved that the maximum sizes of such families satisfying eventown rules and oddtown rules are $2^{\lfloor n\slash2 \rfloor}$ and $n$, respectively, which are the best possible. Their methods highlight the linear algebra method~\cite{babai1988linear} in extremal combinatorics.

There have been a lot of generalizations and applications for oddtown and eventown problems during the decades~\cite{deza1983functions, vu1997extremal, vu1999extremal, o2021towards, Wei2023}. As oddtown and eventown problems only consider the sizes of intersections of at most two sets, one direction of generalizations is to consider the size of $k$-wise intersections for big $k$. Very recently, Johnston and O'Neill \cite{Johnston2023} proposed the following general definition of $\alpha$-intersecting family with $\alpha$ being a vector.

 Given a finite set $X$, let  $2^X$ denote the collection of all subsets of $X$. Suppose $\mathcal F\subset 2^{[n]}$ with $[n]:=\{1,2,\ldots,n\}$. Let $\alpha :=(a_1, \ldots, a_k)\in \mathbb F_2^k$ being a vector of length $k$. Then $\mathcal F$ is called an {\it intersecting family of pattern $\alpha$ modulo $2$}, or {\it an $\alpha$-intersecting family modulo $2$}, if for each $\ell\in [k]$, the intersection of any $\ell$ distinct $F_1, \ldots, F_\ell\in\mathcal F$  is $a_\ell$ modulo $2$, that is,  $|F_1\cap\cdots\cap F_\ell|\equiv a_\ell \pmod 2$. We  omit the term ``modulo $2$'' for simplicity if there is no confusion.
  Let $f_\alpha(n)$ denote the maximum size of an $\alpha$-intersecting family $\mathcal F\subset 2^{[n]}$. For a given pattern $\alpha \in \mathbb F_2^k$, the central problem is to determine the value of $f_\alpha(n)$ for any $n$, or to estimate $f_\alpha(n)$ when $n$ goes to infinity.

Under the above notations, the study of classical oddtown and eventown problems shows that $f_\alpha(n)=2^{\lfloor n\slash2 \rfloor}$ when $\alpha= (0, 0)$ for eventown, and $f_\alpha(n)=n$ when $\alpha=(1, 0)$ for oddtown. The exact values of $f_\alpha(n)$ for any $\alpha$ of length $2$ or $3$, and the orders of $n$ in $f_\alpha(n)$ for any $\alpha$ of length $4$ are totally determined in \cite{Johnston2023}. It is worth noticing that some previously studied generalizations of oddtown and eventown problems are indeed $\alpha$-intersecting families for some specific $\alpha$. For example, Sudakov and Vieira \cite{sudakov2018two} introduced  the {\it strong $k$-eventown problem} for proving a nice stability result of {\it $k$-wise eventown},  where a strong $k$-eventown family is just an $\alpha$-intersecting family modulo $2$ with $\alpha=\bm 0\in \mathbb F_2^k$. The $k$-wise eventown problem requires the size of $k$-wise intersection to be even but no restriction on the $i$-wise intersections for $i<k$. Recently, O'Neill and Verstra\"{e}te~\cite{o2022note} considered the so-called {\it $(k, t+1)$-oddtown problem}, which is just to determine $f_\alpha(n)$ for $\alpha\in \mathbb F_2^k$ with the first $t$ coordinates of $\alpha$ being one and the remaining being zero.


In this paper, we focus on the estimation of $f_\alpha(n)$ for any given $k$ and $\alpha\in \mathbb F_2^k$ when $n$ goes to infinity. We need the following asymptotic notations to state our results. For functions $f, g: \mathbb N\to \mathbb R^+$, we write $f=o(g)$ if $\lim_{n\to\infty} f(n)\slash g(n)=0$, and $f=O(g)$ if there exists a constant $c>0$ such that $f(n)\le cg(n)$ for all $n\in\mathbb N$. If $f=O(g)$ and $g=O(f)$, we say $f=\Theta(g)$. If $g=O(f)$, we say $f=\Omega(g)$. Especially, if $\lim_{n\to\infty}f(n)\slash g(n)=1$, we say $f\sim g$.

There are some known results of $f_\alpha(n)$ for  certain patterns $\alpha$ of any given length $k$.  For the strong $k$-eventown problem, that is $\alpha=\bm 0\in \mathbb F_2^k$, we know that $f_{\bm 0}(n)\sim 2^{\lfloor n\slash 2\rfloor}$. The upper bound follows from the bound of $k$-wise eventown by Vu~\cite{vu1997extremal}, while the lower bound is given by the classical construction: let $B_1, \ldots, B_{\lfloor n\slash 2\rfloor}$ be disjoint subsets of $[n]$ of size $2$, then the family $\{\bigcup_{i\in S}B_i: S\subseteq[\lfloor n\slash 2\rfloor]\}$ is a $\bm 0$-intersecting family of size $2^{\lfloor n\slash 2\rfloor}$. For $\alpha=\bm 1\in \mathbb F_2^k$, that is the all-one vector, it is easy to get $f_{\bm 1}(n)= \Theta(2^{\lfloor n\slash 2\rfloor})$. To see this, if $\mathcal F\subset 2^{[n]}$ is a $\bm 1$-intersecting family, then the family $\{F\cup\{n+1\}: F\in\mathcal F\} \subset 2^{[n+1]}$ is a $\bm 0$-intersecting family. Hence $f_{\bm 1}(n)\le f_{\bm 0}(n+1)= O(2^{\lfloor n\slash 2\rfloor})$. Similarly, if $\mathcal F'\subset 2^{[n-1]}$ is a $\bm 0$-intersecting family, then $\{F'\cup\{n\}: F'\in\mathcal F'\} \subset 2^{[n]}$ is a $\bm 1$-intersecting family. Hence $f_{\bm 1}(n)\ge f_{\bm 0}(n-1)= \Omega(2^{\lfloor n\slash 2\rfloor})$.

For the $(k, t+1)$-oddtown problem, that is $\alpha$-intersecting with $\alpha=(1,1,\ldots, 1, 0,\ldots, 0)\in \mathbb F_2^k$, the first $t$ coordinates being one,  O'Neill and Verstra\"{e}te~\cite{o2022note} proved that $f_\alpha(n)=\Theta(n^{1\slash t})$  whenever $k\ge 2t$. Our first contribution is to improve this result for more patterns.

\begin{theorem}\label{theorem_extend_k_t_oddtown} Let $k\geq 2t$ be positive integers. For any  $\alpha\in \mathbb F_2^k$ with $t$ being the maximum integer such that $a_t=1$, we have
$$f_\alpha(n)\sim (t! n)^{1\slash t}.$$
\end{theorem}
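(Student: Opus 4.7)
The plan is to establish matching asymptotic bounds $f_\alpha(n) \le (t!n)^{1/t}(1+o(1))$ and $f_\alpha(n) \ge (t!n)^{1/t}(1-o(1))$. The upper bound comes from a clean linear-algebra reduction to the classical oddtown theorem, while the lower bound is a direct symmetric-design construction.

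For the upper bound, let $\mathcal{F} = \{F_1, \dots, F_m\}$ be an $\alpha$-intersecting family. For each $I \in \binom{[m]}{t}$, let $u_I \in \mathbb{F}_2^n$ be the characteristic vector of $\bigcap_{i \in I} F_i$. Two facts follow from the hypotheses: since $a_t = 1$, $|u_I|$ is odd, so $\langle u_I, u_I \rangle = 1$ in $\mathbb{F}_2$; and for distinct $I \ne I'$, one has $t+1 \le |I \cup I'| \le 2t \le k$, so since $t$ is maximal with $a_t = 1$ we get $a_{|I \cup I'|} = 0$, yielding $\langle u_I, u_{I'} \rangle = |\bigcap_{i \in I \cup I'} F_i| \equiv 0 \pmod{2}$. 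The Gram matrix of $\{u_I\}$ over $\mathbb{F}_2$ is therefore the identity, so the $u_I$ are linearly independent in $\mathbb{F}_2^n$. This forces $\binom{m}{t} \le n$, and hence $m \le (t!n)^{1/t}(1+o(1))$.

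For the lower bound, I would construct a family of size $m \sim (t!n)^{1/t}$ as follows. Choose $m$ close to $(t!n)^{1/t}$, and for each $r \in \{1,\dots,t\}$ and each $r$-subset $T \subseteq [m]$, introduce $y_r \in \{0,1\}$ fresh points that lie in exactly the sets $\{F_i : i \in T\}$. Then for distinct $i_1,\dots,i_\ell$ with $\ell \le t$,
\[
|F_{i_1} \cap \cdots \cap F_{i_\ell}| \;=\; \sum_{r = \ell}^{t} \binom{m-\ell}{r-\ell}\, y_r,
\]
so the parity conditions become an upper-triangular system modulo $2$ in $(y_t, y_{t-1},\dots,y_1)$ that is solvable by back-substitution, starting from $y_t = 1$ (forced by $a_t = 1$) and proceeding to smaller indices. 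For $\ell > t$ the intersection is empty, matching $a_\ell = 0$. Since $\binom{m}{r} = o(\binom{m}{t})$ for $r<t$, the total point count $\sum_{r=1}^{t} \binom{m}{r} y_r$ is $\binom{m}{t}(1+o(1))$, which fits in $[n]$ for $m = (t!n)^{1/t}(1-o(1))$.

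The main obstacle is conceptual rather than computational: recognizing that the $t$-fold intersection family $\{u_I : I \in \binom{[m]}{t}\}$ inherits an exact oddtown structure under the stated pattern conditions. The hypothesis $k \ge 2t$ is precisely what is needed so that every relevant union $I \cup I'$ lies in the range where $\alpha$ prescribes evenness, and each parity $a_{\ell}$ for $t < \ell \le 2t$ is used exactly once; the values $a_1,\dots,a_{t-1}$ never enter the upper bound at all. The lower-bound construction is routine once the triangular structure of the parity equations is spotted, and the $y_r \in \{0,1\}$ freedom together with the large gap between $\binom{m}{t}$ and the lower-order binomials ensures the budget of $n$ points is comfortably met.
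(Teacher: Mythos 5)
Your proof is correct. The upper bound is essentially the paper's argument in a slightly different dress: the paper forms the family $\mathcal F_t$ of all $t$-wise intersections, first argues these are pairwise distinct, and then invokes the oddtown theorem; you instead verify directly that the Gram matrix of the characteristic vectors $\{u_I\}$ over $\mathbb F_2$ is the identity, which both gives distinctness and linear independence in one stroke. Same linear-algebra content, marginally cleaner packaging.

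The lower bound is where you genuinely diverge. The paper first builds the special family $\mathcal F_{t,t-1}(n)$ (the sets $E_j = \{e \in \binom{[m]}{t} : j \in e\}$, with $m$ constrained to a specific residue class mod $2^{\varphi(t)+1}$ so that the intersection sizes $\binom{m-\ell}{t-\ell}$ realize the pattern $\epsilon_t$), and then transfers from $\epsilon_t$ to a general $\alpha$ with top one at position $t$ via the Substitution Lemma, which itself rests on the Partition Sum (concatenation) Lemma. You instead give a single direct construction: allocate $y_r\in\{0,1\}$ fresh points per $r$-subset for $r=1,\dots,t$, observe the intersection sizes are $\sum_{r\ge\ell}\binom{m-\ell}{r-\ell}y_r$, and solve the resulting unit-upper-triangular parity system for $(y_t,\dots,y_1)$ by back-substitution, letting the $y_r$ depend on $m$. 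This removes both the modular constraint on $m$ and the need for the substitution machinery, making the lower bound self-contained. One small point you leave implicit is that $y_t=1$ together with $m>t$ guarantees the $F_i$ are distinct (each $F_i$ owns a $t$-subset point not in $F_j$), which is easy but should be said. The trade-off is that the paper's two-step route produces the reusable building blocks $\mathcal F_{s,i}$, $\overline{\mathcal F}_{s,i}$ and the Substitution/Partition-Sum lemmas, which it needs again for Theorem~\ref{theorem_main}; your streamlined construction is tailored to this one theorem and wouldn't replace that infrastructure elsewhere.
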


Note that Theorem~\ref{theorem_extend_k_t_oddtown} gives an accurate estimation on  the value of $f_\alpha(n)$ for a fraction of about $\frac{1}{2^{k/2}}$ of the patterns $\alpha\in \mathbb F_2^k$, including the $(k, t+1)$-oddtown problem.  In \cite{Johnston2023}, the authors noticed that for any length $k\ge 3$, there are only eight patterns $\alpha\in \mathbb F_2^k$  satisfying $f_\alpha(n)=\Omega(n)$, which are big, while all other patterns satisfy $f_\alpha(n)=O(\sqrt n)$. They wondered whether there is also a constant upper bound on the number of $\alpha$ satisfying $f_\alpha(n)=\Omega(\sqrt n)$ for any $k$, which led them to raise the following  question.


\begin{question}[\hspace{-0.01em}\cite{Johnston2023}]\label{question_1}
Does there exist an absolute constant $C$ such that, for all $k\geq 2$, there are at most $C$ vectors $\alpha\in\mathbb F_2^k$ satisfying $f_\alpha(n)=\Omega(\sqrt n)$?
\end{question}

  Our second result, which is also our main result, gives an affirmative answer to Question~\ref{question_1}. In fact, our result answers a general question for $f_\alpha(n)=\Omega(n^{1/t})$ for infinitely many integers $t\geq 1$. See below.

\begin{theorem}\label{theorem_main}
For any positive integers $\ell$ and $k\geq 2$, the number of patterns $\alpha\in\mathbb F_2^k$ such that $f_{\alpha}(n)=\Omega(n^{1\slash (2^\ell -1)})$ is no more than $2^{2^{\ell+1}-1}$. For all other patterns not satisfying this property, $f_{\alpha}(n)=O(n^{1\slash 2^\ell})$.
\end{theorem}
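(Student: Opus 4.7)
The plan is to combine Theorem~\ref{theorem_extend_k_t_oddtown} with an iterated trace construction and to proceed by induction on $\ell$. Let $t(\alpha)$ denote the largest index with $a_t=1$ (with $t(\bm 0)=0$). By Theorem~\ref{theorem_extend_k_t_oddtown}, any $\alpha$ with $2^\ell\leq t(\alpha)\leq k/2$ satisfies $f_\alpha(n)\sim(t(\alpha)!\,n)^{1/t(\alpha)}=O(n^{1/2^\ell})$ and is therefore non-exceptional. The patterns that need a reductive argument are those with $t(\alpha)<2^\ell$ (including $\alpha=\bm 0$) or $t(\alpha)>k/2$.

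The central tool is a trace step. Given an $\alpha$-intersecting family $\mathcal F\subseteq 2^{[n]}$ and a fixed $F_0\in\mathcal F$, the trace family $\mathcal G:=\{F\cap F_0: F\in\mathcal F\setminus\{F_0\}\}$ is, when its members are distinct, an $\alpha^{(1)}$-intersecting family on $F_0$, where $\alpha^{(1)}:=(a_2,\ldots,a_k)\in\mathbb F_2^{k-1}$. When several $F\in\mathcal F$ share a common trace $S\subseteq F_0$, the sets $F\setminus S$ form a $\beta$-intersecting family on $[n]\setminus F_0$, for a derived pattern $\beta$ whose coordinates are governed by $|S|\equiv a_2\pmod 2$. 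Counting the number of trace classes and the maximum class size yields a recursive inequality of the form
\begin{equation*}
f_\alpha(n)\;\leq\; f_{\alpha^{(1)}}(|F_0|)\cdot f_\beta(n-|F_0|)+1,
\end{equation*}
relating $f_\alpha(n)$ to the $f$-values of the shorter pattern $\alpha^{(1)}$ and the transformed pattern $\beta$.

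The induction then proceeds as follows. Base case $\ell=1$: at most $8=2^3$ patterns satisfy $f_\alpha(n)=\Omega(n)$ and all others satisfy $f_\alpha(n)=O(\sqrt n)$, as shown in~\cite{Johnston2023}. Inductive step: any $\alpha$ exceptional at level $\ell+1$ must, after a bounded number of iterations of the trace step, be traceable to at most two patterns exceptional at level $\ell$. Since the map $\alpha\mapsto(\alpha^{(1)},\beta)$ is deterministic and the choice of which factor is exceptional contributes a factor of $2$, one obtains the recurrence $C_{\ell+1}\leq 2C_\ell^{\,2}=2^{2^{\ell+2}-1}$, matching the claimed bound. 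Concurrently, iterating the trace inequality and combining with Theorem~\ref{theorem_extend_k_t_oddtown} propagates the $O(n^{1/2^\ell})$ bound to $O(n^{1/2^{\ell+1}})$ for non-exceptional patterns at the next level.

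The main obstacle is that a single application of the trace step is not sharp enough: the product bound $f_{\alpha^{(1)}}(n)\cdot f_\beta(n)$ is a priori too weak to directly close the induction at the exponent $1/2^{\ell+1}$. Overcoming this requires iterating the trace reduction multiple times — each shift $\alpha\mapsto\alpha^{(1)}$ brings $t(\alpha)$ closer to the Theorem~\ref{theorem_extend_k_t_oddtown} regime — and carefully controlling the branching so that the recurrence $C_{\ell+1}\leq 2C_\ell^{\,2}$ still holds. The fact that $\beta$ depends on $\alpha$ only through $a_2\bmod 2$, together with the gap between $n^{1/2^\ell}$ and $n^{1/(2^\ell-1)}$ provided by Theorem~\ref{theorem_extend_k_t_oddtown}, is what provides exactly the slack needed for the dichotomy to propagate through induction.
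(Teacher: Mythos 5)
The paper's proof proceeds quite differently from what you describe, and your proposal has a gap that I do not see how to repair. The paper first establishes Theorem~\ref{theorem_pre_main}: for $k=2^{\ell+1}$, choosing the basis $\{\epsilon_1,\ldots,\epsilon_{2^\ell},\dot\epsilon_{1,2^\ell},\ldots,\dot\epsilon_{2^\ell-1,2^\ell},\bm 1\}$ of $\mathbb F_2^k$ and applying Corollaries~\ref{cor_first_class_good} and~\ref{cor_second_class_good}, the substitution lemma, and the lower bound lemma yields a sharp dichotomy governed by whether $a_{2^\ell}= a_{2^{\ell+1}}$. Then for $k>2^{\ell+1}$, the crucial observation is a \emph{window argument}: if $\alpha$ were exceptional and had $a_j\neq a_{j+2^\ell}$ for some $j\in[2^\ell,k-2^\ell]$, the length-$2^{\ell+1}$ substring $\beta=(a_{j-2^\ell+1},\ldots,a_{j+2^\ell})$ falls under case~(1) of Theorem~\ref{theorem_pre_main}, so $f_\beta=\Theta(n^{1/2^\ell})$, and the asymptotic trace lemma then forces $f_\alpha=O(n^{1/2^\ell})$. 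Hence every exceptional pattern is periodic with period $2^\ell$ from coordinate $2^\ell$ onward, and is determined by its first $2^{\ell+1}-1$ entries — giving the bound $2^{2^{\ell+1}-1}$ directly, without any induction on $\ell$. Your proposal replaces this with an induction on $\ell$ and a product recursion from a trace step, which is a genuinely different attempt, not a rediscovery of the paper's route.

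The problem is that the recursion you set up is circular. Fix $F_0\in\mathcal F$ and group the other members by their trace $S=F\cap F_0$. You are right that the class of members sharing trace $S$, after deleting $S$, forms a $\beta$-intersecting family on $[n]\setminus F_0$; but an explicit computation shows $|F_{i_1}'\cap\cdots\cap F_{i_\ell}'|=|F_{i_1}\cap\cdots\cap F_{i_\ell}|-|S|\equiv a_\ell-a_2\pmod 2$, so in fact $\beta=\alpha+a_2\bm 1$ — not merely ``governed by $a_2\bmod 2$''. By the dual lemma $f_\beta(n)\sim f_\alpha(n)$ (or $\beta=\alpha$ outright when $a_2=0$), so the inequality $f_\alpha(n)\le f_{\alpha^{(1)}}(|F_0|)\cdot f_\beta(n-|F_0|)+1$ bounds $f_\alpha$ by something of the same order as $f_\alpha$ and gives no progress, no matter how many times you iterate: the level of exceptionality is preserved, not decreased. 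You also offer no mechanism that converts ``bounded number of trace iterations'' into a reduction from level $\ell+1$ to level $\ell$, nor a justification for the branching factor of two in $C_{\ell+1}\le 2C_\ell^2$; the fact that the numbers $2^{2^{\ell+1}-1}$ happen to satisfy this recurrence is not evidence that the induction closes. To make the argument work you need the paper's periodicity idea, or at least the ability to isolate a length-$2^{\ell+1}$ window of $\alpha$ on which the asymptotics are already known.
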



 Applying Theorem~\ref{theorem_main} with $\ell=2$, the number of patterns $\alpha\in\mathbb F_2^k$ such that $f_{\alpha}(n)=\Omega(n^{1\slash 3})$ is no more than $2^7$.
This answers Question~\ref{question_1} with $C=2^7$. Obviously, $C=2^7$ is not sharp, because by applying Theorem~\ref{theorem_extend_k_t_oddtown} there are always some pattern $\alpha\in \mathbb F_2^k$ with $f_\alpha(n)=\Theta(n^{1\slash 3})$ when $k\ge 6$. We do not take an effort to find the best $C$ here, but leave it as an open problem. Further, we can conclude that the number of patterns $\alpha\in\mathbb F_2^k$ such that $f_{\alpha}(n)=\Omega(n^{1\slash (2^\ell -1)})$ is exactly $2^{2^{\ell+1}-1}$ when $k\ge 2^{\ell+1}$ (see Remark~\ref{rmk_equal}).

In the process of proving Theorem~\ref{theorem_extend_k_t_oddtown} and  Theorem~\ref{theorem_main}, we partially confirm a conjecture posed in \cite{o2022note} as a byproduct.
\begin{conjecture}[\hspace{-0.01em}\cite{o2022note}, Conjecture 1]\label{conjecture_1}
Let $t, k$ be integers with $t\ge 2$ and $2t-2>k$. If $(\mathcal A_1, \ldots, \mathcal A_k)$ are set families of an $n$ element set with $\mathcal A_j=\{A_{j, i}: i\in [m]\}$ where $|\bigcap_{j=1}^k A_{j, i_j}|$ is even if and only if at least $t$ of those $i_j$ are distinct, then $m=O(n^{1\slash\lfloor k\slash 2\rfloor})$.
\end{conjecture}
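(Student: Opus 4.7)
The plan is to adapt the $\mathbb{F}_2$ linear-algebra method underlying Theorem~\ref{theorem_extend_k_t_oddtown} to the multi-family setting of the conjecture, using a ``bipartite'' split between primary and dual coordinates. Set $p:=\lfloor k/2\rfloor$ and $q:=k-p\ge p$; the hypothesis $2t-2>k$ forces $t\ge p+2$, so every $k$-wise intersection indexed by a tuple whose set of distinct entries has size at most $p+1$ is odd. Treating the first $p$ families as ``primary'' coordinates and the remaining $q$ as test directions should allow linear independence to be read off from parities of $k$-wise intersections.

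For each ordered $p$-tuple of distinct indices $\mathbf{i}=(i_1,\ldots,i_p)\in[m]^p$, I would let $w_{\mathbf{i}}\in\mathbb{F}_2^n$ be the characteristic vector of $A_{1,i_1}\cap\cdots\cap A_{p,i_p}$, and for each $q$-tuple $\mathbf{j}=(j_1,\ldots,j_q)$ let $z_{\mathbf{j}}$ be the characteristic vector of $A_{p+1,j_1}\cap\cdots\cap A_{k,j_q}$. Then
\[
\langle w_{\mathbf{i}},z_{\mathbf{j}}\rangle\;\equiv\;\bigl|A_{1,i_1}\cap\cdots\cap A_{p,i_p}\cap A_{p+1,j_1}\cap\cdots\cap A_{k,j_q}\bigr|\pmod{2},
\]
and by hypothesis this parity depends only on $|\mathrm{supp}(\mathbf{i})\cup\mathrm{supp}(\mathbf{j})|$. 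Showing that a positive fraction of the $\Theta(m^p)$ vectors $w_{\mathbf{i}}$ is linearly independent over $\mathbb{F}_2$ would then yield $m^p=O(n)$, i.e.\ $m=O(n^{1/p})$, which is exactly the conjectured bound.

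Independence would be established by choosing the duals $\mathbf{j}(\mathbf{i})$ adaptively --- for instance, by padding $\mathbf{i}$ with repeated entries of $\mathbf{i}$ itself, or by a lexicographic, Vandermonde-style recursion --- so that $|\mathrm{supp}(\mathbf{i})\cup\mathrm{supp}(\mathbf{j}(\mathbf{i}))|<t$ (ensuring a $1$ on the diagonal of the pairing matrix), while pushing off-diagonal entries into the regime $|\mathrm{supp}(\mathbf{i})\cup\mathrm{supp}(\mathbf{j}(\mathbf{i}'))|\ge t$ where they vanish modulo $2$. The main obstacle is the abrupt parity transition at support size $t$: in the tight range $p+2\le t\le 2p$, the pairing matrix exhibits both $0$'s and $1$'s in a Johnson-scheme-type pattern, and direct triangularization fails. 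I therefore expect only a partial confirmation to come out of this plan --- the easier sub-range $t>k$ (where every intersection is odd, the pairing matrix is all-ones, and a single rank-one shift triangularizes it) already covers a substantial portion of the parameter space, and the remaining tight range plausibly reduces, by specialization to the diagonal family $\mathcal{A}_1=\cdots=\mathcal{A}_k$, to a single-family $\alpha$-intersecting bound for which Theorem~\ref{theorem_main} can be invoked.
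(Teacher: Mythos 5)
Your proposed bipartite pairing-matrix argument is genuinely different from the paper's route, but it has a concrete flaw and a misdirected fallback. First, the sub-range $t>k$ that you describe as ``easy'' is in fact the regime where the conjecture \emph{fails} as stated: if $t>k$ then ``at least $t$ of the $i_j$ are distinct'' is vacuous, so every $k$-wise intersection is odd, and already for $\mathcal A_1=\cdots=\mathcal A_k$ one can take a $\bm 1$-intersecting family of size $\Theta(2^{n/2})$, far exceeding $O(n^{1/\lfloor k/2\rfloor})$. The all-ones pairing matrix you propose to ``triangularize by a rank-one shift'' has $\mathbb F_2$-rank $1$ and certifies nothing about $m$; the conjecture implicitly requires $t\le k$, and there is no easy sub-range of the kind you posit. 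Second, your honest admission that triangularization fails in the tight range $\lfloor k/2\rfloor+2\le t\le 2\lfloor k/2\rfloor$ is correct: choosing $\mathbf j(\mathbf i)$ with $\mathrm{supp}(\mathbf j)\subseteq\mathrm{supp}(\mathbf i)$ puts a $1$ on the diagonal, but any $\mathbf i'$ sharing $p-1$ entries with $\mathbf i$ still gives support size at most $p+1<t$, so off-diagonal $1$'s persist and the Johnson-scheme pattern defeats a lexicographic elimination. Your plan offers no mechanism to extract a large independent subset from this rank-deficient matrix, so the direct approach stalls exactly where you say it does.

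Your fallback is pointed at the wrong theorem. Theorem~\ref{theorem_main} only counts how many patterns can have $f_\alpha(n)=\Omega(n^{1/(2^\ell-1)})$; as a black box it does not tell you that the particular pattern $\alpha=(1,\ldots,1,0,\ldots,0)$ arising from the diagonal specialization of the conjecture falls into the $O(n^{1/2^\ell})$ class. What does the job is Theorem~\ref{theorem_pre_main}~(1), applied when $k=2^{\ell+1}$: since $2t-2>k$ and $t\le k$ force the position $2^\ell$ of $\alpha$ to be $1$ and position $2^{\ell+1}$ to be $0$, one gets $a_{2^\ell}\ne a_{2^{\ell+1}}$ and hence $f_\alpha(n)\sim(2^\ell!\,n)^{1/2^\ell}$. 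The paper reaches this not by any pairing-matrix argument but by expressing $\alpha-\epsilon_{2^\ell}$ in the basis $\{\epsilon_1,\ldots,\epsilon_{2^\ell-1},\dot\epsilon_{1,2^\ell},\ldots,\dot\epsilon_{2^\ell-1,2^\ell},\bm 1\}$ and invoking the substitution lemma to transport the asymptotics from the easy pattern $\epsilon_{2^\ell}$ (for which $k=2t$ and the classical oddtown count of $t$-wise intersections works). This reduction-by-substitution is the idea missing from your plan; it sidesteps the rank obstruction entirely rather than trying to defeat it. Note also that, for precisely this reason, the paper's confirmation is itself only partial (Corollary~\ref{cor-conje}): diagonal families and $k$ a power of $2$. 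Your plan, with the fallback corrected to cite Theorem~\ref{theorem_pre_main}, would land in the same place, but by a route that you yourself have to abandon in the hard regime.
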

We show the correctness of Conjecture~\ref{conjecture_1}  when $\mathcal A_1=\cdots=\mathcal A_k$ and $k$ is a  power of $2$. See Corollary~\ref{cor-conje}. Note that in this special case, it is indeed to prove that $f_\alpha(n)=O(n^{1\slash\lfloor k\slash 2\rfloor})$ for $\alpha=(1,\ldots,1,0,\ldots,0)$ with $t>k/2$ ones, which is not involved in Theorem~\ref{theorem_extend_k_t_oddtown}.

Finally, we mention that the methodology for proving Theorem~\ref{theorem_extend_k_t_oddtown} and  Theorem~\ref{theorem_main} can be easily extended to $\alpha$-intersecting problems modulo $p$ for any fixed prime $p$. For a vector $\alpha=(a_1, \ldots, a_k)\in\mathbb F_p^k$,
 it is natural to define an {\it $\alpha$-intersecting family modulo $p$} as a collection $\mathcal F$ of subsets such that all distinct $i$-wise intersections of $\mathcal F$ have size $a_i$ modulo $p$.
 Let $f_\alpha(p, n)$ be the maximum size of an $\alpha$-intersecting family  $\mathcal F\subset 2^{[n]}$ modulo $p$.
 Following the notation in \cite{Johnston2023},  we write $a_i=\star$ to indicate that all distinct $i$-wise intersections have size non-zero modulo $p$. So in general, under the modulo $p$ setting with $p\ge 3$, one can consider the $\alpha$-intersecting problem for any $\alpha$ chosen in $\left\{\mathbb F_p\cup\{\star\}\right\}^k$. In  \cite{Johnston2023}, the values of $f_{\alpha}(3, n)$ for any pattern $\alpha\in \{0, \star\}^2\cup \{0, \star\}^3$ are studied carefully.

We list our results for the modulo $p$ setting here. For details of proofs,  we refer readers to the Appendix. The numberings of theorems are the same as in the Appendix.

\begin{thmbis}{theorem_k_reachable_2} Let $p$ be a prime.
 For any positive $t$ and  pattern $\alpha=(a_1, \ldots, a_k)\in\mathbb F_p^{k}$ with $k\ge 2t$, if
  $a_t\ne a_{t+1}=\cdots =a_k$,
we have $f_\alpha(p, n) =\Theta(n^{1\slash t})$.
Moreover, if $a_t-a_{t+1}=1$, $f_\alpha(p, n) \sim (t! n)^{1\slash t}$.
\end{thmbis}

\begin{thmbis}{theorem_main_2}
Let $p$ be a prime. For any positive integers $k$ and $\ell$, the number of patterns $\alpha\in\mathbb F_p^k$ such that $f_{\alpha}(p, n)=\Omega(n^{1\slash (p^\ell -1)})$ is no more than $p^{2\cdot p^\ell -1}$. For all other patterns not satisfying this property, $f_{\alpha}(p, n)=O(n^{1\slash p^\ell})$.
\end{thmbis}

Similarly, the number of patterns $\alpha\in\mathbb F_p^k$ such that $f_{\alpha}(p, n)=\Omega(n^{1\slash (p^\ell -1)})$ is exactly $p^{2\cdot p^\ell -1}$ when $k\ge 2\cdot p^\ell$.

\begin{thmbis}{theorem_k_reachable_2_star}
Let $p$ be a prime. For any  positive $\ell$ and any pattern $\alpha=(a_1, \ldots, a_k)\in \left\{\mathbb F_p\cup\{\star\}\right\}^k$ with $k\ge 2\ell$ such that $a_\ell\ne 0$ and $a_{\ell+1}=\cdots= a_k=0$,
we have $f_\alpha(p, n) = \Theta( n^{1\slash \ell})$. Moreover, if $a_\ell=1$ or $a_\ell=\star$, $f_\alpha(p, n) \sim (\ell! n)^{1\slash \ell}$.
\end{thmbis}

Our paper is organized as follows. In Section~\ref{sec:two-fam}, we introduce two basic constructions of families used throughout our paper and discuss some of their properties. In Section~\ref{sec_preliminary}, three lemmas from \cite{Johnston2023} are introduced and generalized to asymptotic settings. Section~\ref{sec_generalizations} is devoted to the proof of Theorem~\ref{theorem_extend_k_t_oddtown}. Our main result Theorem~\ref{theorem_main} is proven in Section~\ref{sec_main_result}. Some extensions of our results to modulo $p$ settings are given in the Appendix, and we conclude our paper in Section~\ref{sec_conclude}.

Notations: denote $[m, n]=\{m, m+1, \ldots, n\}$ for any integers $m\leq n$. Denote $\epsilon_s$ the $0$-$1$ vector with only one symbol $1$ in the $s$-th position, and denote $\dot{\epsilon}_{s,\ell}$ as the $0$-$1$ vector with symbol $1$ only in the $s$-th, $(s+\ell)$-th, $(s+2\ell)$-th, $\ldots$, positions, until the vector length is exceeded. For  vectors $\alpha_1, \ldots, \alpha_t\in \mathbb F_p^k$, denote $\langle\alpha_1, \ldots, \alpha_t\rangle$ the set consisting of all vectors in the form $\sum_{i=1}^t r_i\cdot \alpha_i$, where $r_i\in \mathbb F_p$, $i\in [t]$. As the function $f_\alpha(n)$ needs $n$ to be an integer, sometimes we omit the flooring and ceiling functions in the expressions of $n$ for brevity when there is no confusion.

\section{Two Constructions}\label{sec:two-fam}

In this section, we give two constructions of $\alpha$-intersecting families for certain pattern $\alpha$ modulo $2$. These two families are quite useful and will be utilized throughout. They also provide lower bounds on the size of intersecting families of certain pattern.

In our construction, we denote $\varphi(s)$ as the number of $2$-factors in $s!$ for any positive integer $s$. That is, $\varphi(s)$ is the maximum integer such that $2^{\varphi(s)}\mid s!$.

\begin{construction}\label{construction_building_block}
  For any integers $s$, $n$ and $i\in [0, 2^{\varphi(s)+1}-1]$, let $m$ be the largest integer satisfying $\binom m s\le n$ and $m\equiv i\mod 2^{\varphi(s)+1}$. Let $X$ be the set of all $s$-subsets of $[m]$. Now we construct two families $\mathcal F,\overline{\mathcal F}\subset 2^X$ for the given $s,i$ and $n$.

  \begin{itemize}
    \item[(1)] For each $j\in [m]$, let $E_j=\{e\in X: j\in e\}$, and let $\mathcal F_{s, i}(n)= \{E_1, \ldots, E_m\}$.
    \item[(2)] For each $j\in [m]$, let $\overline{E}_j=\{e\in X: j\notin e\}$, and let $\overline{\mathcal F}_{s, i}(n)= \{\overline{E}_1, \ldots, \overline{E}_m\}$.
  \end{itemize}

Since $|X|=\binom m s\le n$, we can fix an injection from $X$ to $[n]$. So $\mathcal F$ and $\overline{\mathcal F}$ can be viewed as families over $[n]$ in a natural way.

\end{construction}

One can see that  the family $\mathcal F_{s, i}(n)$ in Construction~\ref{construction_building_block} is a generalization of \cite[Construction 3]{Johnston2023}, where only the case $s=2$ and  $i=1$ was considered. Note that for any $i$, the size of $\mathcal F_{s, i}(n)$ and $\overline{\mathcal F}_{s, i}(n)$ is $m$, which satisfies $m \sim (s! n)^{1\slash s}$ when $n$ goes to infinity since $\binom m s\le n < \binom {m+2^{\varphi(s)+1}} s$ . 

Next, we show that these two families satisfy good intersection patterns.

\begin{lemma}\label{lemma_first_class_good}
For any positive integers $s$ and $n$, if $|\mathcal F_{s, s-1}(n)|=m\ge s+1$, then $\mathcal F_{s, s-1}(n)$ is $\epsilon_s$-intersecting modulo $2$ with $\epsilon_s\in \mathbb F_2^m$.
%
%
\end{lemma}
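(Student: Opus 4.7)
The plan is to verify the $\epsilon_s$-intersection condition level by level. For distinct $j_1,\ldots,j_\ell\in[m]$ one has
$$E_{j_1}\cap\cdots\cap E_{j_\ell}=\{e\in X:\{j_1,\ldots,j_\ell\}\subseteq e\},$$
whose size is $\binom{m-\ell}{s-\ell}$ if $\ell\le s$ and $0$ otherwise. The cases $\ell>s$ (empty, hence even) and $\ell=s$ (the unique such $s$-subset being $\{j_1,\ldots,j_s\}$, size $1$) already match the $s$-th coordinate of $\epsilon_s$. The real content is therefore to prove that $\binom{m-\ell}{s-\ell}$ is even for every $1\le\ell\le s-1$.

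For this I would invoke Kummer's theorem in the form that the $2$-adic valuation of $\binom{(s-\ell)+(m-s)}{s-\ell}$ equals the number of carries when $s-\ell$ and $m-s$ are added in base $2$. The congruence $m\equiv s-1\pmod{2^{\varphi(s)+1}}$ built into Construction~\ref{construction_building_block} translates to $m-s\equiv -1\pmod{2^{\varphi(s)+1}}$, which is exactly the statement that the lowest $\varphi(s)+1$ binary digits of $m-s$ are all $1$. Provided $s-\ell<2^{\varphi(s)+1}$, the positive integer $s-\ell$ has its lowest $1$-bit at some position $i\in[0,\varphi(s)]$, where the corresponding bit of $m-s$ is also $1$, so at least one carry is produced and $\binom{m-\ell}{s-\ell}$ is even.

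The only step requiring genuine care is the side inequality $s\le 2^{\varphi(s)+1}$, which guarantees $s-\ell<2^{\varphi(s)+1}$ for all $\ell\ge 1$. I would extract this from the elementary lower bound $\varphi(s)\ge\lfloor s/2\rfloor$ obtained simply by counting even factors in $s!$, which settles all $s\ge 4$; the remaining values $s\in\{1,2,3\}$ are dispatched by inspection, as $\varphi(1)=0$ and $\varphi(2)=\varphi(3)=1$. Beyond this bookkeeping I do not foresee any real obstacle: the modulus $2^{\varphi(s)+1}$ in Construction~\ref{construction_building_block} was chosen precisely so Kummer's carry count would fire, and the hypothesis $m\ge s+1$ only serves to ensure that every level $\ell\in[m]$ appearing in the definition of an $\epsilon_s$-intersecting family is actually exercised.
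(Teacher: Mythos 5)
Your proof is correct, but it takes a genuinely different route from the paper's. The paper handles $1\le\ell\le s-1$ by writing $\binom{m-\ell}{s-\ell}=\frac{(m-\ell)(m-\ell-1)\cdots(m-s+1)}{(s-\ell)!}$ and observing two facts directly: the denominator $(s-\ell)!$ divides $s!$ and therefore contributes at most $\varphi(s)$ factors of $2$; and the single numerator factor $m-s+1$ is already $\equiv 0\pmod{2^{\varphi(s)+1}}$ because $m\equiv s-1\pmod{2^{\varphi(s)+1}}$. So the quotient has strictly positive $2$-adic valuation and one is done, with no need for Kummer's theorem and, in particular, no need for the side inequality $s\le 2^{\varphi(s)+1}$. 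Your Kummer-based argument is valid: the congruence does force the low $\varphi(s)+1$ bits of $m-s$ to be all $1$, and the lowest $1$-bit of $s-\ell$ lands in that window once you know $s-\ell<2^{\varphi(s)+1}$, producing a carry. The trade-off is that you must separately establish $s\le 2^{\varphi(s)+1}$ (via $\varphi(s)\ge\lfloor s/2\rfloor$ and small-case checks), a step the paper's proof sidesteps entirely by localizing all the needed $2$-powers in the one factor $m-s+1$. Both arguments are sound; the paper's is shorter and more self-contained, while yours makes the ``carry'' mechanism behind the choice of modulus more transparent.
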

\begin{proof}
For any $\ell$ different members $E_{j_1}, \ldots, E_{j_\ell}$ in $\mathcal F_{s, s-1}(n)$, by definition,
$$\begin{aligned}
|\bigcap_{x\in [\ell]}E_{j_x}|&=|\{e\in X: \{j_1, \ldots, j_\ell\}\subset e\}|\\
 &=\binom{m-\ell}{s-\ell}.
\end{aligned}$$
If $\ell>s$, $|\bigcap_{x\in [\ell]}E_{j_x}|=\binom{m-\ell}{s-\ell}=0$; if  $\ell=s$, $|\bigcap_{x\in [\ell]}E_{j_x}|=\binom{m-s}{0}=1$.
For any $1\le \ell< s$, $$|\bigcap_{x\in [\ell]}E_{j_x}|=\frac{(m-\ell)(m-\ell-1)\cdots (m-s+1)}{(s-\ell)!}.$$
Consider the number of $2$-factors in the numerator and the denominator separately. Since $(s-\ell)!\mid s!$, the number of $2$-factors in the denominator is at most $\varphi (s)$. On the other hand, by our choice of $m$ in $\mathcal F_{s, s-1}(n)$, $m\equiv s-1 \mod 2^{\varphi(s)+1}$. Hence $2^{\varphi(s)+1}$ divides the numerator and $|\bigcap_{x\in [\ell]}E_{j_x}|$ is even.
\end{proof}

Note that $\epsilon_s, s\in [k]$ form a basis of $\mathbb F_2^k$ for any $k$, which will be quite useful in constructing large families satisfying any certain pattern $\alpha\in \mathbb F_2^k$. For example, see the proof of Lemma~\ref{theorem_lower_infty} in the next section. As a corollary, we have the following lower bound.
{
\begin{corollary}\label{cor_first_class_good}
 For any positive integers $s\leq k$,  $f_{\epsilon_s}(n)\geq (1-o(1))(s! n)^{1\slash s}$  with $\epsilon_s\in \mathbb F_2^k$.
\end{corollary}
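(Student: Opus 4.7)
The plan is to take $\mathcal F_{s,s-1}(n)$ from Construction~\ref{construction_building_block} as an explicit witness for the lower bound. First I would check the size estimate: the quantity $m=|\mathcal F_{s,s-1}(n)|$ is defined as the largest integer congruent to $s-1$ modulo $2^{\varphi(s)+1}$ with $\binom{m}{s}\le n$, so we have the sandwich $\binom{m}{s}\le n<\binom{m+2^{\varphi(s)+1}}{s}$. Since $s$ is fixed and therefore $2^{\varphi(s)+1}$ is a constant, the standard asymptotic $\binom{x}{s}\sim x^s/s!$ immediately gives $m\sim (s!\,n)^{1/s}$, and in particular $m\ge (1-o(1))(s!\,n)^{1/s}$ as $n\to\infty$.

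Next I would argue the family qualifies as $\epsilon_s$-intersecting for $\epsilon_s\in\mathbb F_2^k$. Since $m\to\infty$ with $n$, for all sufficiently large $n$ we have $m\ge\max\{s+1,k\}$, so in particular the hypothesis $m\ge s+1$ of Lemma~\ref{lemma_first_class_good} is satisfied. That lemma then asserts that $\mathcal F_{s,s-1}(n)$ is $\epsilon_s$-intersecting modulo~$2$ viewed as a pattern in $\mathbb F_2^m$, meaning that for every $\ell\in[m]$ each $\ell$-wise intersection of distinct members is odd when $\ell=s$ and even otherwise. Restricting these conditions to $\ell\in[k]\subseteq[m]$ produces exactly the requirements imposed by the length-$k$ pattern $\epsilon_s\in\mathbb F_2^k$, so the same family certifies the corollary's intersection property under the length-$k$ convention as well.

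Combining the two observations yields $f_{\epsilon_s}(n)\ge|\mathcal F_{s,s-1}(n)|=m\ge(1-o(1))(s!\,n)^{1/s}$, which is the claimed bound. I do not anticipate any substantive obstacle: the argument is essentially a bookkeeping combination of Lemma~\ref{lemma_first_class_good} with the size estimate built into Construction~\ref{construction_building_block}. The only mildly subtle point is reconciling the vector-length mismatch between $\epsilon_s\in\mathbb F_2^m$ (as in the lemma) and $\epsilon_s\in\mathbb F_2^k$ (as in the corollary), but this is handled by the simple observation that the length-$k$ parity conditions form a subset of the length-$m$ ones whenever $k\le m$, which is automatic for all $n$ large enough.
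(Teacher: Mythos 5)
Your proposal is correct and matches the paper's (implicit) argument exactly: it takes $\mathcal F_{s,s-1}(n)$ as the witness, uses the sandwich $\binom{m}{s}\le n<\binom{m+2^{\varphi(s)+1}}{s}$ to get $m\sim(s!\,n)^{1/s}$, and invokes Lemma~\ref{lemma_first_class_good} to verify the $\epsilon_s$-intersecting property. Your explicit handling of the length mismatch between $\epsilon_s\in\mathbb F_2^m$ and $\epsilon_s\in\mathbb F_2^k$ is a careful touch that the paper leaves unstated, but it is the same bookkeeping either way.
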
}

\begin{lemma}\label{lemma_second_class_good}
For any positive integer $s$ and $i\in [0, 2^{s}-1]$, if $|\overline{\mathcal F}_{2^s-1, i}(n)|=m\geq2^s$, then the family $\overline{\mathcal F}_{2^s-1, i}(n)$ is
$\dot{\epsilon}_{i+1, 2^s}$-intersecting modulo $2$ with $\dot{\epsilon}_{i+1, 2^s}\in\mathbb F_2^m$.

\end{lemma}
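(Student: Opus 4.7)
The plan is to compute the size of every $\ell$-wise intersection explicitly and then determine its parity via Lucas's theorem. For any $\ell$ distinct members $\overline{E}_{j_1},\ldots,\overline{E}_{j_\ell}$ of $\overline{\mathcal F}_{2^s-1, i}(n)$, I would note that the intersection $\bigcap_{x\in[\ell]}\overline{E}_{j_x}$ consists precisely of the $(2^s-1)$-subsets of $[m]\setminus\{j_1,\ldots,j_\ell\}$, so its size equals $\binom{m-\ell}{2^s-1}$ (taken to be $0$ when $m-\ell<2^s-1$). Everything then reduces to determining the parity of this binomial coefficient as $\ell$ varies.

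The key step is Lucas's theorem: since $2^s-1$ has binary expansion consisting of $s$ ones, $\binom{m-\ell}{2^s-1}$ is odd if and only if the lowest $s$ binary digits of $m-\ell$ are all $1$, equivalently $m-\ell\equiv 2^s-1\pmod{2^s}$. By Construction~\ref{construction_building_block} applied with subset size $2^s-1$, the integer $m$ satisfies $m\equiv i\pmod{2^{\varphi(2^s-1)+1}}$, and Legendre's formula gives $\varphi(2^s-1)=(2^s-1)-s$, so the modulus $2^{2^s-s}$ is a multiple of $2^s$ for every $s\ge 1$. Consequently $m\equiv i\pmod{2^s}$, and the parity condition translates to $\ell\equiv i+1\pmod{2^s}$, matching exactly the support of $\dot{\epsilon}_{i+1,2^s}$.

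The last item to check is that the degenerate case $m-\ell<2^s-1$ (where the binomial vanishes) never conflicts with the predicted odd positions. Writing $m=i+2^s q$ and $\ell=(i+1)+2^s r$ with $\ell\le m$, the upper bound forces $r\le q-1$, whence $m-\ell=2^s(q-r)-1\ge 2^s-1$; thus whenever $\ell\in[1,m]$ lies in the support of $\dot{\epsilon}_{i+1,2^s}$, the intersection is truly odd rather than vacuously zero. Combining these computations identifies the intersection pattern with $\dot{\epsilon}_{i+1,2^s}\in\mathbb F_2^m$.

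The only nontrivial obstacle is the parity computation itself. The choice of subset size $2^s-1$ combined with the prescribed residue class of $m$ is engineered precisely so that the parities of $\binom{m-\ell}{2^s-1}$ become periodic in $\ell$ with period $2^s$; once this observation is in place, the rest is routine bookkeeping with modular arithmetic and the definition of $\dot{\epsilon}_{i+1,2^s}$.
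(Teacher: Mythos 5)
Your proof is correct and follows the same overall structure as the paper's: identify the $\ell$-wise intersection size as $\binom{m-\ell}{2^s-1}$, and show its parity depends only on $\ell \bmod 2^s$. The one genuine difference is the parity argument itself. The paper determines the $2$-adic valuation of $\binom{m-\ell}{2^s-1}$ by hand, matching factors in the numerator $\prod_{j\in[2^s-1]}(m-\ell-2^s+1+j)$ against those in $(2^s-1)!$, and in the ``even'' case isolating a $j_0$ with $2^s\mid(m-\ell-2^s+1+j_0)$. You instead invoke Lucas's theorem: since $2^s-1$ is all ones in binary, $\binom{m-\ell}{2^s-1}$ is odd iff $m-\ell\equiv 2^s-1\pmod{2^s}$, which together with $m\equiv i\pmod{2^s}$ gives the condition $\ell\equiv i+1\pmod{2^s}$ in one stroke. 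Your route is cleaner and makes the reason for choosing subset size $2^s-1$ transparent; the paper's factor-counting is more elementary (no appeal to Lucas) and is the same technique they reuse for the modulo-$p$ analogue (Lemma~\ref{lemma_second_class_good_2}), though Lucas's theorem would of course adapt to any prime as well. Your extra check of the $m-\ell<2^s-1$ case is harmless but actually already subsumed by Lucas: if $0\le m-\ell<2^s-1$, the lowest $s$ bits of $m-\ell$ cannot all be $1$, so Lucas consistently reports the (vacuously zero) binomial as even.
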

\begin{proof}
For any $\ell$ distinct members $\overline{E}_{j_1}, \ldots, \overline{E}_{j_\ell}$ in $\overline{\mathcal F}_{2^s-1, i}(n)$, by definition,
$$\begin{aligned}
|\bigcap_{x\in [\ell]}\overline{E}_{j_x}|&=|\{e\in X: \{j_1, \ldots, j_\ell\}{\cap e=\emptyset}\}|\\
 &=\binom{m-\ell}{2^s-1}.
\end{aligned}$$
Write $\binom{m-\ell}{2^s-1}$ in the form of $\left(\prod_{j\in [2^s-1]}(m-\ell-2^s+1+j)\right)\slash (2^s-1)!$.
 From the requirement of $m$ of $\overline{\mathcal F}_{2^s-1, i}(n)$, $m\equiv i\mod 2^{\varphi(2^s-1)+1}$ and hence $m\equiv i\mod 2^{s}$.

 When $\ell\equiv i+1\mod 2^s$, we have  $m-\ell\equiv -1 \mod 2^s$.
Under this relationship, for any $j\in [2^s-1]$, $m-\ell-2^s+1+j\equiv j\mod 2^s$, which means $m-\ell-2^s+1+j$ has the same number of $2$-factors as $j$. So $\prod_{j\in [2^s-1]}(m-\ell-2^s+1+j)$ has the same number of $2$-factors as $(2^s-1)!$. As a consequence, $|\bigcap_{x\in [\ell]}\overline{E}_{j_x}|=\binom{m-\ell}{2^s-1}$ is odd.

When $\ell\not\equiv i+1\mod 2^s$, there always exists some $j_0\in [2^s-1]$ such that $2^s\mid (m-\ell-2^s+1+j_0)$. We can split the denominator $(2^s-1)!$ into  three parts, that is,
$$(2^s-1)!=(2^s-1-j_0)!\cdot (2^s-j_0)\cdot \prod_{j\in [j_0-1]}(2^s+ j-j_0).$$
For any $j\in [j_0+1, 2^s-1]$, the number $(m-\ell-2^s+1+j)$ has the same number of 2-factors as $j-j_0$. For any $j\in [j_0-1]$, the number $(m-\ell-2^s+1+j)$ has the same number of 2-factors as $2^s+ j-j_0$. However, $(2^s-j_0)$ has less 2-factors than $(m-\ell-2^s+1+j_0)$. As a consequence, $|\bigcap_{x\in [\ell]}\overline{E}_{j_x}|=\binom{m-\ell}{2^s-1}$ is even.
\end{proof}

\begin{corollary}\label{cor_second_class_good}
 For any positive integer $s$ and $i\in [2^{s}]$,  $f_{\dot{\epsilon}_{i, 2^s}}(n)\geq (1-o(1))((2^s-1)! n)^{1\slash (2^s-1)}$  with $\dot{\epsilon}_{i, 2^s}\in \mathbb F_2^k$ of any length $k$.
\end{corollary}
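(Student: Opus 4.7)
The plan is to deduce the lower bound directly from Construction~\ref{construction_building_block} and Lemma~\ref{lemma_second_class_good}, up to a mild index shift and a routine asymptotic estimate for the construction size. Given $i \in [2^s]$, I would consider the family $\overline{\mathcal F}_{2^s-1,\,i-1}(n)$, which is well-defined since $i-1 \in [0, 2^s-1]$ matches the admissible range of the second parameter in Construction~\ref{construction_building_block}.

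First, I would invoke Lemma~\ref{lemma_second_class_good} with its parameter $i$ replaced by $i-1$, obtaining that $\overline{\mathcal F}_{2^s-1,\,i-1}(n)$ is $\dot{\epsilon}_{(i-1)+1,\,2^s}$-intersecting modulo $2$, i.e.\ exactly $\dot{\epsilon}_{i,\,2^s}$-intersecting in $\mathbb F_2^m$, where $m = |\overline{\mathcal F}_{2^s-1,\,i-1}(n)|$. To upgrade this to a pattern in $\mathbb F_2^k$ for an arbitrary $k$, I would note that any position $\ell > m$ imposes only a vacuous constraint on a family of size $m$, since there are no $\ell$ distinct members whose intersection to test; therefore the same family simultaneously witnesses the $\dot{\epsilon}_{i,\,2^s}$-intersecting property for \emph{every} length $k$, which is what the corollary requires.

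It remains to estimate $m$ asymptotically. By Construction~\ref{construction_building_block}, $m$ is the largest integer with $\binom{m}{2^s-1} \le n$ and $m \equiv i-1 \pmod{2^{\varphi(2^s-1)+1}}$. Since $\binom{m}{2^s-1} \sim m^{2^s-1}/(2^s-1)!$ as $m\to\infty$, while the congruence restriction forces $m$ down by at most the fixed constant $2^{\varphi(2^s-1)+1}$, one obtains $m \sim ((2^s-1)!\,n)^{1/(2^s-1)}$ and hence $f_{\dot{\epsilon}_{i,\,2^s}}(n) \ge m \ge (1-o(1))((2^s-1)!\,n)^{1/(2^s-1)}$. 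There is no substantive obstacle here: the corollary is essentially a repackaging of Lemma~\ref{lemma_second_class_good}, and the only bookkeeping issues are the index shift from $i$ to $i-1$ and the observation that intersection constraints at positions beyond $|\mathcal F|$ are automatically satisfied.
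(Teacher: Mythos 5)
Your argument is correct and is precisely the implicit derivation behind the corollary: shift the index to $\overline{\mathcal F}_{2^s-1,\,i-1}(n)$, invoke Lemma~\ref{lemma_second_class_good}, observe that intersection constraints at positions $\ell$ exceeding the family size are vacuous (so the pattern length $k$ is arbitrary), and use $m \sim ((2^s-1)!\,n)^{1/(2^s-1)}$ from the discussion following Construction~\ref{construction_building_block}. One small imprecision worth flagging: $[0, 2^s-1]$ is the index range stipulated by Lemma~\ref{lemma_second_class_good}, not the admissible range of the second parameter in Construction~\ref{construction_building_block}, which is the larger set $[0, 2^{\varphi(2^s-1)+1}-1]$; since $2s \le 2^s$ for all $s\ge 1$ the former is contained in the latter, so the family is indeed well-defined and your conclusion stands.
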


\section{Three lemmas and their generalizations}\label{sec_preliminary}

In \cite{Johnston2023}, the authors provided three lemmas used as  fundamental tools for determining $f_{\alpha}(n)$. In this section, we generalize them for our purpose.

\begin{lemma}[Dual Lemma \cite{Johnston2023}]~\label{lemma_dual}
Let $\alpha\in \mathbb F_2^k\backslash\{\bm 0, \bm 1\}$. Then $f_{\alpha}(n)\sim f_{\alpha+\bm 1}(n)$.
\end{lemma}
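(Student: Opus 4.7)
The plan is a two-part argument. First, I will construct an $(\alpha+\bm 1)$-intersecting family on $[n]$ from any $\alpha$-intersecting family on $[n-1]$ by adjoining a single fresh element to every set. Concretely, given an $\alpha$-intersecting family $\mathcal{F}\subset 2^{[n-1]}$, form $\mathcal{F}^+:=\{F\cup\{n\}:F\in\mathcal{F}\}\subset 2^{[n]}$. For any $\ell$ distinct members $F_{j_1},\ldots,F_{j_\ell}\in\mathcal{F}$,
$$\Bigl|\bigcap_{s=1}^{\ell}(F_{j_s}\cup\{n\})\Bigr|=\Bigl|\bigcap_{s=1}^{\ell}F_{j_s}\Bigr|+1\equiv a_\ell+1\pmod 2,$$
so $\mathcal{F}^+$ is $(\alpha+\bm 1)$-intersecting with $|\mathcal{F}^+|=|\mathcal{F}|$. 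This yields $f_{\alpha+\bm 1}(n)\geq f_\alpha(n-1)$; since $(\alpha+\bm 1)+\bm 1=\alpha$, the same construction applied to an $(\alpha+\bm 1)$-intersecting family gives $f_\alpha(n)\geq f_{\alpha+\bm 1}(n-1)$. Chaining these two inequalities produces the sandwich
$$f_\alpha(n-1)\leq f_{\alpha+\bm 1}(n)\leq f_\alpha(n+1).$$

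To upgrade this sandwich into the asymptotic equivalence $f_{\alpha+\bm 1}(n)\sim f_\alpha(n)$, I need $f_\alpha(n+1)\slash f_\alpha(n-1)\to 1$ as $n\to\infty$. This is where the hypothesis $\alpha\notin\{\bm 0,\bm 1\}$ becomes essential: for every other $\alpha$, I will invoke a polynomial upper bound $f_\alpha(n)=O(n^{c(\alpha)})$. Combined with the trivial monotonicity $f_\alpha(n-1)\leq f_\alpha(n)\leq f_\alpha(n+1)$ coming from the embedding $[n-1]\hookrightarrow[n]$, any such polynomial growth forces $f_\alpha(n+1)\slash f_\alpha(n-1)\to 1$, so the sandwich collapses to the desired $f_{\alpha+\bm 1}(n)\sim f_\alpha(n)$. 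Note that for the excluded patterns $\bm 0$ and $\bm 1$ this passage genuinely fails: there $f_\alpha(n)=\Theta(2^{\lfloor n/2\rfloor})$ and the ratio $f_\alpha(n+1)\slash f_\alpha(n-1)$ oscillates between $1$ and $2$, so the conclusion would be false for them.

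The main obstacle is verifying the polynomial bound for every $\alpha\neq\bm 0,\bm 1$; the construction in Step~1 is entirely routine, and the real content of the lemma is to rule out any other source of super-polynomial growth. I plan to handle this either by citing the classification of high-growth patterns in \cite{Johnston2023} (which identifies exactly eight patterns with $f_\alpha(n)=\Omega(n)$ and shows all others are $O(\sqrt n)$, hence polynomial) or by a direct Gram-matrix argument over $\mathbb F_2$: form the matrix $M$ whose rows are the characteristic vectors of the members of $\mathcal{F}$, and analyze $MM^\top$. If $(a_1,a_2)\notin\{(0,0),(1,1)\}$ the resulting Gram matrix already has rank $\geq|\mathcal{F}|-1$ over $\mathbb F_2$, forcing $|\mathcal{F}|\le n+1$; otherwise one passes to the first coordinate $a_i$ at which the parity constraint becomes non-degenerate relative to the $\bm 0$-/$\bm 1$-intersecting configuration and applies the analogous higher-order bound, which still yields polynomial growth. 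Once the polynomial bound is in place, the one-element augmentation in Step~1 transfers families cleanly, the loss of a single ground-set element is invisible at leading order, and the equivalence $\sim$ follows automatically.
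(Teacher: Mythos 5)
Your one-element augmentation is exactly the mechanism the paper itself uses: compare the informal comparison of $f_{\bm 0}$ and $f_{\bm 1}$ in the introduction, and, more directly, the proof of the Shift Lemma (Lemma~\ref{lemma_shift} in the Appendix), which is the modulo-$p$ generalization of the Dual Lemma. So the approach coincides with the paper's up to the sandwich $f_\alpha(n-1)\le f_{\alpha+\bm 1}(n)\le f_\alpha(n+1)$, and the identification of the hypothesis $\alpha\notin\{\bm 0,\bm 1\}$ as the thing that keeps $f_\alpha$ at polynomial size is also the right heuristic.

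The gap is in how you collapse the sandwich. You assert that a polynomial bound $f_\alpha(n)=O(n^{c(\alpha)})$, together with monotonicity, forces $f_\alpha(n+1)/f_\alpha(n-1)\to 1$; that implication is false. A non-decreasing integer sequence can satisfy $g(n)\le n$ yet have a ratio that never settles: for $g(n)=2^{\lfloor\log_2 n\rfloor}$ one has $g(n)\le n$ but $g(2^m)/g(2^m-1)=2$ for every $m$. An $O(n^c)$ bound, a $\Theta(n^c)$ bound, or even the classification ``eight patterns are $\Omega(n)$, the rest are $O(\sqrt n)$'' from \cite{Johnston2023} all fail to exclude such staircase behaviour. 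To actually collapse the sandwich you need strictly more: either a two-sided smooth asymptotic $f_\alpha(n)\sim c\,n^{a}$ for the pattern in question (which in this paper comes from the matching constructions and upper bounds, and using it here risks circularity, since the Dual/Shift Lemma is itself invoked in those arguments), or an increment estimate of the form $f_\alpha(n+1)-f_\alpha(n)=o(f_\alpha(n))$. Your Gram-matrix sketch only delivers an upper bound $|\mathcal F|\le n+1$ and says nothing about increments. It is worth noting that the paper's own proof of the Shift Lemma makes the identical jump, passing from $f_\alpha(p,n)=O(n)$ to $f_\alpha(p,n)\sim f_\alpha(p,n+p)$ without further justification, so you are in company here; but a self-contained proof of the Dual Lemma should close this step explicitly, either by first pinning down the sharp asymptotic for the pattern or by bounding the per-step growth of $f_\alpha$ directly.
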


\begin{lemma}[Trace Lemma \cite{Johnston2023}]~\label{lemma_trace}
Let $\alpha= (a_1, \ldots, a_k)\in \mathbb F_2^k$ and let $\beta= (a_{t+1}, \ldots, a_{t+r})\in \mathbb F_2^r$ where $t+r<k$, $f_{\alpha}(n)>k$ and $a_{t+1}\neq a_{t+2}$. Then
$$f_{\alpha}(n)\le f_{\beta}(n)+t.$$
\end{lemma}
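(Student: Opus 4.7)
The plan is to run a standard trace/localization argument. Since $f_\alpha(n)>k\ge t$, let $\mathcal F$ be a maximum $\alpha$-intersecting family on $[n]$ and fix any $t$ distinct members $F_1,\ldots,F_t\in\mathcal F$. For every remaining $F\in\mathcal F\setminus\{F_1,\ldots,F_t\}$ I would form the trace $G_F:=F\cap F_1\cap\cdots\cap F_t$ and study the family $\mathcal G:=\{G_F:F\in\mathcal F\setminus\{F_1,\ldots,F_t\}\}$, viewed as sitting inside the ground set $Y:=F_1\cap\cdots\cap F_t\subseteq[n]$.

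The next step is to verify that $\mathcal G$ is $\beta$-intersecting modulo $2$. For any $\ell\in[r]$ and any $\ell$ distinct elements $G_{H_1},\ldots,G_{H_\ell}$ of $\mathcal G$, one simply computes
\[
\bigcap_{s=1}^{\ell} G_{H_s} \;=\; F_1\cap\cdots\cap F_t\cap H_1\cap\cdots\cap H_\ell,
\]
which is an intersection of $t+\ell$ distinct members of $\mathcal F$, so its cardinality is congruent to $a_{t+\ell}=\beta_\ell$ modulo $2$. This handles the intersection conditions for every $\ell\le r$, which is legal because $t+r<k$ guarantees that the coordinate $a_{t+r}$ is defined.

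The main point of the argument, and the step I would expect to be the crux, is to show that the map $F\mapsto G_F$ is injective, so that $|\mathcal G|=|\mathcal F|-t$. Suppose for contradiction that $G_F=G_{F'}$ for two distinct $F,F'\in\mathcal F\setminus\{F_1,\ldots,F_t\}$. Then $G_F\subseteq F'$ forces
\[
F_1\cap\cdots\cap F_t\cap F\cap F' \;=\; G_F,
\]
so $|G_F|$ is simultaneously the size of a $(t+1)$-wise and a $(t+2)$-wise intersection of distinct members of $\mathcal F$. The $\alpha$-intersecting hypothesis then forces $a_{t+1}\equiv a_{t+2}\pmod 2$, contradicting the assumption $a_{t+1}\ne a_{t+2}$.

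Combining these, $\mathcal G$ is a $\beta$-intersecting family on a ground set of size $|Y|\le n$ and of size $|\mathcal F|-t$, so by monotonicity of $f_\beta$ in the ground-set size one gets $f_\alpha(n)-t\le f_\beta(|Y|)\le f_\beta(n)$, which rearranges to the claimed bound. The role of the auxiliary assumption $f_\alpha(n)>k$ is merely to guarantee that $\mathcal F$ has strictly more than $t$ members, so the trace family is nonempty and the injectivity argument makes sense; the only genuinely delicate step is the distinctness of the traces, which is precisely what the condition $a_{t+1}\ne a_{t+2}$ buys.
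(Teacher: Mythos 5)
Your proof is correct and uses precisely the trace/localization argument that the paper itself employs in its Asymptotic Trace Lemma (Lemma~\ref{lemma_asym_trace}), specialized to $s=2$: fix $t$ members $F_1,\ldots,F_t$, restrict every other set to $T=F_1\cap\cdots\cap F_t$, observe the resulting family is $\beta$-intersecting because an $\ell$-wise intersection in the trace is a $(t+\ell)$-wise intersection upstairs, and use $a_{t+1}\ne a_{t+2}$ to show the restriction map is injective. This is essentially the same approach, so no further comment is needed.
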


\begin{lemma}[Partition Sum Lemma \cite{Johnston2023}]~\label{lemma_partition_sum}
Let $\alpha$ and $\beta$ be vectors in $\mathbb F_2^k$ and let $\gamma=\alpha+ \beta$. Then for any $r\in [n-1]$ where $f_\alpha(r), f_\beta(n-r)>k$ we have
$$f_{\gamma}(n)\ge \min\{f_\alpha(r), f_\beta(n-r)\}.$$
\end{lemma}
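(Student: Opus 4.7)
The plan is to construct a $\gamma$-intersecting family on $[n]$ by gluing an $\alpha$-intersecting family supported on $[r]$ with a $\beta$-intersecting family supported on $[r+1, n]$. First, pick maximum families $\mathcal{F}_\alpha = \{A_1, \ldots, A_{m_\alpha}\} \subset 2^{[r]}$ with $m_\alpha = f_\alpha(r)$, and $\mathcal{F}_\beta = \{B_1, \ldots, B_{m_\beta}\} \subset 2^{[r+1, n]}$ with $m_\beta = f_\beta(n-r)$. Set $m := \min\{m_\alpha, m_\beta\}$ and define $F_i := A_i \cup B_i$ for each $i \in [m]$, treating the two index lists as paired arbitrarily.

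The key step is a disjoint-support parity calculation: since $A_{i_j} \subset [r]$ and $B_{i_j} \subset [r+1, n]$ live on disjoint ground sets, for any $\ell \in [k]$ and any distinct indices $i_1, \ldots, i_\ell \in [m]$ one has
$$\left|\bigcap_{j=1}^\ell F_{i_j}\right| = \left|\bigcap_{j=1}^\ell A_{i_j}\right| + \left|\bigcap_{j=1}^\ell B_{i_j}\right| \equiv a_\ell + b_\ell = c_\ell \pmod 2,$$
where $\gamma = (c_1, \ldots, c_k)$. Distinctness of the $F_i$'s follows immediately from $F_i \cap [r] = A_i$ together with the pairwise distinctness of the $A_i$'s within $\mathcal{F}_\alpha$. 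Hence $\{F_1, \ldots, F_m\}$ is a $\gamma$-intersecting family of size $m$, which yields $f_\gamma(n) \ge \min\{f_\alpha(r), f_\beta(n-r)\}$.

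The assumption $f_\alpha(r), f_\beta(n-r) > k$ is used only to guarantee $m > k$, so that the $\ell$-wise intersection condition is tested non-vacuously for every $\ell \in [k]$ and the construction realizes the pattern $\gamma$ on all prescribed layers rather than satisfying it trivially at high-order positions. I do not anticipate any real obstacle here; the entire argument reduces to writing down the union construction and performing the one-line parity check above.
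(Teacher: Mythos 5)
Your proof is correct and is essentially the concatenation construction that the paper itself highlights in the remark immediately following the lemma: pick maximum $\alpha$- and $\beta$-intersecting families on disjoint ground sets, pair their members by index, take unions, and observe that disjoint supports make every $\ell$-wise intersection size split additively, giving the pattern $\gamma = \alpha + \beta$. Your reading of the hypothesis $f_\alpha(r), f_\beta(n-r) > k$ as a non-vacuity guarantee is also consistent with the paper's usage.
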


We remark that the essence of the partition sum lemma is a construction by concatenation. In fact, if $\mathcal F_1=(F_1, \ldots, F_{m_1})\subset 2^X$ and $\mathcal F_2=(F_1', \ldots, F_{m_2}')\subset 2^{X'}$ are intersecting families of patterns $\alpha$, $\beta \in \mathbb F_2^k$ based on two disjoint ground sets $X$ and $X'$, respectively, then the concatenation of  $\mathcal F_1$ and $\mathcal F_2$, i.e.,
$$\mathcal F=\{F_i\cup F_i': 1\le  i\le  \min\{m_1, m_2\} \}$$
is naturally an intersecting family of cardinality $\min\{m_1, m_2\} $ on {$X\cup X'$} with pattern $\alpha+\beta$. Every time we apply the partition sum lemma, we apply the concatenation construction to merge the existing constructions.

{Note that the trace lemma and the partition sum lemma require too many conditions. Since we mainly work on asymptotic cases, we will modify them into more applicable forms.} We utilize an immediate consequence from our constructions in Section~\ref{sec:two-fam}, which
ensures that for any given intersection pattern  $\alpha$, $f_{\alpha}(n)\to\infty$ as $n\to\infty$.

\begin{lemma}\label{theorem_lower_infty}
For any  $\alpha\in \mathbb F_2^k$, if  $t\in [k]$ is the maximum integer such that $a_t=1$, then $f_\alpha(n)=\Omega(n^{1\slash t})$.

\end{lemma}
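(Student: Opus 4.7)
The plan is to decompose the pattern $\alpha$ into ``single-one'' patterns and then glue together one large family for each component via the concatenation construction described right after Lemma~\ref{lemma_partition_sum}. Concretely, let $S=\{i\in[k]:a_i=1\}$, so that $t=\max S$ and $\alpha=\sum_{i\in S}\epsilon_i$ in $\mathbb F_2^k$. Write $r=|S|\le k$; this is a constant independent of $n$.

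Partition $[n]$ into $r$ pairwise disjoint ground sets $\{X^{(i)}\}_{i\in S}$ with $|X^{(i)}|=\Theta(n/r)$. On each $X^{(i)}$ I would apply Construction~\ref{construction_building_block} with parameters $s=i$ and residue $i-1$, and invoke Lemma~\ref{lemma_first_class_good} to obtain an $\epsilon_i$-intersecting family $\mathcal G^{(i)}\subset 2^{X^{(i)}}$. Corollary~\ref{cor_first_class_good} then guarantees
\[
|\mathcal G^{(i)}|\ \ge\ (1-o(1))\bigl(i!\,|X^{(i)}|\bigr)^{1/i}\ =\ \Omega\!\bigl(n^{1/i}\bigr).
\]

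Now set $m:=\min_{i\in S}|\mathcal G^{(i)}|$ and apply the concatenation construction from the remark following Lemma~\ref{lemma_partition_sum} iteratively across the $r$ disjoint ground sets: picking the first $m$ members of each $\mathcal G^{(i)}$ and taking pointwise unions yields a single family $\mathcal F\subset 2^{[n]}$ of cardinality $m$ whose $\ell$-wise intersection parities add up coordinate-by-coordinate, producing the pattern $\sum_{i\in S}\epsilon_i=\alpha$. Since $i\le t$ for every $i\in S$ and $r$ is a constant, we have $n^{1/i}\ge n^{1/t}$ for all $i\in S$ once $n$ is sufficiently large, hence $m=\Omega(n^{1/t})$, which gives $f_\alpha(n)=\Omega(n^{1/t})$.

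The only delicate point is a bookkeeping one. The partition sum lemma as stated needs each constituent family to have size greater than $k$, but since $r\le k$ is constant and $|\mathcal G^{(i)}|\to\infty$ with $n$ for every $i\in S$, this condition is automatic for all sufficiently large $n$, and the $\Omega$-notation absorbs the finitely many small $n$. A secondary check is that Lemma~\ref{lemma_first_class_good} requires $|\mathcal F_{s,s-1}(n)|\ge s+1$, which again holds for all large enough $n$. Apart from these trivial threshold verifications, the argument is a direct combination of the explicit construction in Section~\ref{sec:two-fam} with a constant number of applications of the concatenation/partition sum idea.
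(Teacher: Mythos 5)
Your proposal is correct and follows essentially the same route as the paper: decompose $\alpha=\sum_{i\in S}\epsilon_i$, invoke Corollary~\ref{cor_first_class_good} (via Construction~\ref{construction_building_block} and Lemma~\ref{lemma_first_class_good}) to get $\Omega(n^{1/i})$-size $\epsilon_i$-intersecting families, and combine them by concatenation as in the Partition Sum Lemma~\ref{lemma_partition_sum}. The paper states this slightly more compactly (treating the $|S|=1$ case separately and then applying Lemma~\ref{lemma_partition_sum} abstractly rather than unrolling the concatenation), but the substance and the lemmas used are identical.
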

\begin{proof}
Denote $P$ as the set of non-zero positions of $\alpha$. If $|P|=1$, i.e., $\alpha=\epsilon_t$, then by {Corollary~\ref{cor_first_class_good}, $f_\alpha(n)=\Omega(n^{ 1\slash t})$.} 

Otherwise, $\alpha$ is a sum of more than two patterns of $\epsilon_i$ with $i\in P$. By {Corollary~\ref{cor_first_class_good}},  $f_{\epsilon_i}=\Omega(n^{ 1\slash i})=\Omega(n^{ 1\slash t})$ since $i\leq t$ for each $i\in P$. By the concatenation construction, or equivalently, Lemma~\ref{lemma_partition_sum}, $f_\alpha(n)\ge \min_{i\in P}\{f_{\epsilon_i}(n\slash |P|)\}=\Omega(n^{ 1\slash t})$.
\end{proof}

By the lower bound in Lemma~\ref{theorem_lower_infty}, we can deduce the following  generalization of trace lemma in an asymptotic view. Here, we use the subscript $r$ in $\bm 1_r, \bm 0_r$ to indicate the length of these two vectors.

\begin{lemma}[Asymptotic Trace Lemma]\label{lemma_asym_trace}
For $\alpha= (a_1, \ldots, a_k)\in \mathbb F_2^k$ of some given length $k$ and its subsequence $\beta=(a_{t+1}, \ldots, a_{t+r})\in \mathbb F_2^r$ {with $t+r\leq k$ and $\beta\not\in\{\bm 1_r, \bm 0_r\}$, we have$f_\alpha(n)=O(f_\beta(n))$.}
\end{lemma}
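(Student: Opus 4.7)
The plan is to extend the proof of the original trace lemma (Lemma~\ref{lemma_trace}) by replacing the injectivity of the trace map with a fiber-size bound that follows from the non-constancy of $\beta$. If $t=0$ then $\beta$ is a prefix of $\alpha$, so every $\alpha$-intersecting family is automatically $\beta$-intersecting, giving $f_\alpha(n)\le f_\beta(n)$ immediately; assume $t\ge 1$ in what follows.

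Let $\mathcal F$ be an $\alpha$-intersecting family on $[n]$ of size $m=f_\alpha(n)$. Since $\beta\ne \bm 0_r$ contains at least one $1$-entry, Lemma~\ref{theorem_lower_infty} yields $f_\beta(n)\to\infty$, and likewise $m\to\infty$, so we may assume $m>t$. Pick any $t$ distinct members $G_1,\ldots,G_t\in\mathcal F$, set $I=G_1\cap\cdots\cap G_t$, and consider the trace map $\phi:F\mapsto F\cap I$ on $\mathcal F':=\mathcal F\setminus\{G_1,\ldots,G_t\}$, with image $\mathcal F''\subseteq 2^I$. A direct computation, identical to the one in the proof of the original trace lemma, shows that for any $\ell\le r$ and distinct $S_1,\ldots,S_\ell\in\mathcal F''$ with chosen representatives $F_1,\ldots,F_\ell\in\mathcal F'$, the intersection $S_1\cap\cdots\cap S_\ell$ equals $F_1\cap\cdots\cap F_\ell\cap I$, whose size is $\equiv a_{t+\ell}\pmod 2$ because $F_1,\ldots,F_\ell,G_1,\ldots,G_t$ are $t+\ell$ distinct members of $\mathcal F$. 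Hence $\mathcal F''$ is $\beta$-intersecting on $I$, giving $|\mathcal F''|\le f_\beta(|I|)\le f_\beta(n)$.

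The key new step is bounding the fiber sizes of $\phi$. If distinct $F_1,\ldots,F_\ell\in\mathcal F'$ share a common trace $S$, then $F_1\cap\cdots\cap F_\ell\cap I=S=F_1\cap I$, and comparing parities of both sides using the pattern $\alpha$ yields $a_{t+\ell}\equiv a_{t+1}\pmod 2$. Since $\beta$ is non-constant and starts with $a_{t+1}$, there is a smallest index $j\in[2,r]$ with $a_{t+j}\ne a_{t+1}$, so every fiber of $\phi$ has size at most $j-1\le r-1$. Therefore $m-t=|\mathcal F'|\le (r-1)|\mathcal F''|\le(r-1)f_\beta(n)$, and since $r,t$ are constants while $f_\beta(n)\to\infty$, we conclude $f_\alpha(n)=O(f_\beta(n))$.

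The main obstacle, and the reason the original trace lemma insisted on $a_{t+1}\ne a_{t+2}$, is controlling collisions of the trace map when the first two entries of $\beta$ coincide. The proposal resolves this by observing that a fiber of size $\ge\ell$ forces $a_{t+\ell}=a_{t+1}$, so the hypothesis $\beta\notin\{\bm 0_r,\bm 1_r\}$ automatically caps the fiber sizes by a constant depending only on $\beta$, which is then absorbed into the big-$O$. As a minor side point, the equality case $t+r=k$ (which is excluded in Lemma~\ref{lemma_trace}) is handled here without extra work, because $\mathcal F''$ naturally carries the pattern $(a_{t+1},\ldots,a_k)$ of length $k-t\ge r$, of which $\beta$ is a prefix.
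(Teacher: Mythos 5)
Your argument is essentially the paper's own: fix $t$ distinct members, trace everything onto their common intersection, observe that the image is a $\beta$-intersecting family, and use the non-constancy of $\beta$ to cap the fiber sizes of the trace map by a constant. The only cosmetic differences are that you split off $t=0$ as a trivial case and weaken the fiber bound from $s-1$ (the index of first disagreement) to $r-1$; neither affects the big-$O$ conclusion, so the proposal is correct and matches the paper's approach.
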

\begin{proof}
By Lemma~\ref{theorem_lower_infty}, $f_\alpha(n)>k$ when $n$ is big enough. Let $\mathcal F$ be any $\alpha$-intersecting  family of size $f_\alpha(n)$. Fix $t$ distinct members $F_1, \ldots, F_t$ from $\mathcal F$ and set $T=\bigcap_{i\in [t]}F_i$. Then the family
$$\mathcal F_T:=\{F\cap T: F\in\mathcal F\backslash\{F_1, \ldots, F_t\}\}$$
is a $\beta$-intersecting family.

Let $s\in [r]$ be the smallest number such that $a_{t+1}\neq a_{t+s}$, which  exists by $\beta\not\in\{\bm 1_r, \bm 0_r\}$. We claim that there do not exist $s$ distinct members $F_{t+1}, \ldots, F_{t+s}\in \mathcal F\backslash\{F_1, \ldots, F_t\}$ such that $F_{t+i}\cap T$ are all equal for $i\in [s]$. Otherwise, we have
$$\bigcap_{i\in [t+s]}F_i=\bigcap_{i\in [s]}(F_{t+i}\cap T)=\bigcap_{i\in [t+1]}F_i,$$
which leads to a contradiction because $a_{t+1}\neq a_{t+s}$.

As a result, $f_{\beta}(n)\ge |\mathcal F_T|\ge (f_\alpha(n)-t)\slash (s-1)$, and hence {$f_\alpha(n)=O(f_\beta(n))$.}
\end{proof}

The following result is a generalization of the partition sum lemma, which shows that the size of the intersecting family for a combination pattern is at least as that for individuals.

\begin{lemma}[Lower Bound Lemma]\label{lemma_f_upper_bound}
Let $\alpha_1, \ldots, \alpha_t\in\mathbb F_2^k$  be linearly independent vectors for some given length $k$. If there exists a constant $c>0$ such that $f_{\alpha_i}(n)=\Omega(n^c)$ for each $i\in [t]$, then for any $\alpha\in \langle\alpha_1, \ldots, \alpha_t\rangle$, we have $f_\alpha(n)=\Omega(n^c)$.
\end{lemma}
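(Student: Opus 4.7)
The plan is to invoke the concatenation construction underlying the Partition Sum Lemma (Lemma~\ref{lemma_partition_sum}) iteratively. Since all scalars lie in $\mathbb F_2$, every $\alpha \in \langle \alpha_1, \ldots, \alpha_t\rangle$ can be written as $\alpha = \sum_{i \in S}\alpha_i$ for some subset $S \subseteq [t]$ (unique by linear independence, though only existence is used below). Set $s := |S|$, which is a constant at most $t$ and independent of $n$.

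If $S = \emptyset$, then $\alpha = \bm 0$, and the classical eventown construction already yields $f_{\bm 0}(n) \sim 2^{\lfloor n/2\rfloor}$, which trivially dominates $\Omega(n^c)$.

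Otherwise, write $S = \{i_1, \ldots, i_s\}$ and partition $[n]$ into $s$ disjoint blocks $X_1, \ldots, X_s$, each of cardinality $\lfloor n/s\rfloor$ or $\lceil n/s\rceil$. On $X_j$, the hypothesis combined with $|X_j| = \Theta(n)$ yields an $\alpha_{i_j}$-intersecting family $\mathcal F_j = \{F_{j, 1}, \ldots, F_{j, m_j}\} \subset 2^{X_j}$ with $m_j = \Omega((n/s)^c) = \Omega(n^c)$, since $s$ is a constant. Setting $m := \min_j m_j$, I form the concatenation
\[
\mathcal F := \left\{ \bigcup_{j=1}^s F_{j, r} : 1 \le r \le m\right\} \subset 2^{[n]}.
\]
For any $\ell$ distinct indices $r_1, \ldots, r_\ell \in [m]$, disjointness of the blocks $X_j$ gives
\[
\Bigl|\bigcap_{h=1}^\ell \bigcup_{j=1}^s F_{j, r_h}\Bigr| \;=\; \sum_{j=1}^s \Bigl|\bigcap_{h=1}^\ell F_{j, r_h}\Bigr| \;\equiv\; \sum_{j=1}^s (\alpha_{i_j})_\ell \;=\; \alpha_\ell \pmod 2,
\]
so $\mathcal F$ is $\alpha$-intersecting of size $m = \Omega(n^c)$, establishing $f_\alpha(n) = \Omega(n^c)$.

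No real obstacle arises; the argument is essentially bookkeeping on top of Lemma~\ref{lemma_partition_sum}. The one minor technicality is that invoking the Partition Sum Lemma literally requires each summand's $f$-value to exceed $k$, which is automatic for all sufficiently large $n$ because each $f_{\alpha_{i_j}}(\cdot) = \Omega(n^c)$ tends to infinity (alternatively, one may appeal directly to Lemma~\ref{theorem_lower_infty}).
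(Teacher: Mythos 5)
Your proof is correct and takes essentially the same approach as the paper: decompose $\alpha$ as $\sum_{i\in S}\alpha_i$ (unique by independence), dispatch $\alpha=\bm 0$ via the eventown lower bound, and then concatenate $\alpha_{i_j}$-intersecting families on $s$ disjoint blocks of size $\Theta(n/s)$, which is exactly the iterated Partition Sum Lemma the paper invokes. The only stylistic difference is that you unfold the concatenation explicitly and verify the parity of $\ell$-wise intersections, whereas the paper cites Lemma~\ref{lemma_partition_sum} once with a $\min$.
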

\begin{proof}
If $\alpha=\bm 0$, a trivial lower bound shows $f_{\bm 0}(n)\ge 2^{\lfloor n\slash2\rfloor}=\Omega(n^c)$ for any constant $c>0$.

Otherwise, we can uniquely write $\alpha$ in a sum of some members $\alpha_i$. Without loss of generality, say,
$$\alpha=\alpha_1+\cdots+\alpha_s$$
for some $s\in [t]$. Since $s\leq t\leq k$, then by the partition sum lemma,
$$f_\alpha(n)\ge\min_{i\in [s]}\{f_{\alpha_i}(\frac n s)\}=\Omega(n^c).$$
\end{proof}

{
\begin{lemma}[Substitution Lemma]\label{lemma_substitude}
Let  $\alpha_1, \ldots, \alpha_t \in\mathbb F_2^k$ be linearly independent vectors satisfying each $f_{\alpha_i}(n)=\Omega(n^{c_1})$ for some $c_1>0$. Suppose $\beta_1, \beta_2$ are two vectors satisfying that  $\beta_1-\beta_2\in\langle \alpha_1, \ldots, \alpha_t\rangle$. If  $f_{\beta_1}(n)\sim cn^{c_2}$ for some constants $0<c_2<c_1$ and $c>0$, then $f_{\beta_2}(n)\sim cn^{c_2}$.

\end{lemma}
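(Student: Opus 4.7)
The plan is to exploit the identity $\beta_2 = \beta_1 + \gamma$, where $\gamma := \beta_1 + \beta_2 \in \langle \alpha_1, \ldots, \alpha_t\rangle$ (working in $\mathbb{F}_2$ so $-=+$), together with the Partition Sum Lemma. First I would invoke the Lower Bound Lemma (Lemma~\ref{lemma_f_upper_bound}) to deduce from $f_{\alpha_i}(n) = \Omega(n^{c_1})$ that $f_\gamma(n) = \Omega(n^{c_1})$; so for some constant $c'>0$ and all sufficiently large $r$, $f_\gamma(r) \geq c' r^{c_1}$.

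For the lower bound $f_{\beta_2}(n) \geq (1-o(1))\,c n^{c_2}$, I would apply Lemma~\ref{lemma_partition_sum} to the decomposition $\beta_2 = \beta_1 + \gamma$ with an auxiliary parameter $r = r(n)$ chosen so that $n^{c_2/c_1} \ll r \ll n$, which is admissible precisely because $c_2 < c_1$. With such $r$, $f_\gamma(r) \geq c' r^{c_1}$ grows strictly faster than $n^{c_2}$ and so eventually dominates $f_{\beta_1}(n-r) \sim c n^{c_2}$. Hence
\[
f_{\beta_2}(n) \;\geq\; \min\{f_{\beta_1}(n-r),\, f_\gamma(r)\} \;=\; f_{\beta_1}(n-r) \;\sim\; c n^{c_2},
\]
the last equivalence using $r = o(n)$.

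For the matching upper bound, I would apply the same lemma in the opposite direction, to $\beta_1 = \beta_2 + \gamma$, with the same range of $r$:
\[
f_{\beta_1}(n) \;\geq\; \min\{f_{\beta_2}(n-r),\, f_\gamma(r)\}.
\]
Since $f_\gamma(r) \gg c n^{c_2}$ while the hypothesis forces $f_{\beta_1}(n) \sim c n^{c_2}$, the minimum must be realized by $f_{\beta_2}(n-r)$, yielding $f_{\beta_2}(n-r) \leq (1+o(1))\, c n^{c_2}$. Substituting $n \mapsto n + r$ and using $r = o(n)$ again converts this into $f_{\beta_2}(n) \leq (1+o(1))\, c n^{c_2}$, and combining with the lower bound yields $f_{\beta_2}(n) \sim c n^{c_2}$.

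The only delicate point is the balancing of the auxiliary parameter $r$: it must be chosen large enough that $f_\gamma(r)$ ceases to be the bottleneck, yet small enough that $(n \pm r)^{c_2}/n^{c_2} \to 1$. The strict inequality $c_2 < c_1$ is exactly what guarantees such a window exists, so this hypothesis is essential to the method. A minor bookkeeping item is that Lemma~\ref{lemma_partition_sum} requires both $f_{\beta_1}(n-r)$ and $f_\gamma(r)$ to exceed $k$, but this is automatic for large $n$ in view of Lemma~\ref{theorem_lower_infty}.
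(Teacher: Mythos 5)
Your proof is correct and follows essentially the same strategy as the paper's: apply the Partition Sum Lemma in both directions with an auxiliary parameter $r$ in the window $n^{c_2/c_1} \ll r \ll n$ (the paper picks the concrete $r = s\,n^{(c_2/c_1+1)/2}$), so that the $\gamma$-side dominates $n^{c_2}$ while $r=o(n)$ keeps the $\beta$-side asymptotics intact. The only cosmetic difference is that you first bundle $\gamma=\beta_1+\beta_2$ via the Lower Bound Lemma and then apply the Partition Sum Lemma once, whereas the paper decomposes $\gamma = \alpha_1+\cdots+\alpha_s$ and partitions into $s+1$ pieces directly; your variant is marginally cleaner bookkeeping but carries the same content.
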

\begin{proof}
Since $\beta_1-\beta_2\in\langle \alpha_1, \ldots, \alpha_t\rangle$, we can write $\beta_1-\beta_2$ uniquely as the sum of some $\alpha_i$.  Without loss of generality, say, for some $s\in [t]$,
$$\beta_1-\beta_2=\alpha_1+\cdots+\alpha_s.$$
This is equivalent to
$$\beta_1=\beta_2+\alpha_1+\cdots+\alpha_s \text{ and }\beta_2=\beta_1+\alpha_1+\cdots+\alpha_s.$$
 Set $n'= n^{(c_2\slash c_1+ 1)\slash 2}$, which satisfies $n'=o(n)$ since $c_2< c_1$. By the partition sum lemma, we have
\begin{equation}\label{eq1}f_{\beta_1}(n)\ge\min\left\{f_{\beta_2}(n-sn'), \min_{i\in [s]}\{f_{\alpha_i}(n')\}\right\},
\end{equation} and
\begin{equation}\label{eq2}f_{\beta_2}(n)\ge\min\left\{f_{\beta_1}(n-sn'), \min_{i\in [s]}\{f_{\alpha_i}(n')\}\right\}.
\end{equation}

Since $f_{\alpha_i}(n)=\Omega(n^{c_1})$,
$f_{\alpha_i}(n')=\Omega(n^{(c_1+c_2)\slash 2})$ for each $i\in [s]$. By Eq.~(\ref{eq1}), $f_{\beta_2}(n-sn')< \min_{i\in [s]}\{f_{\alpha_i}(n')\}$ since otherwise $f_{\beta_1}(n)=\Omega(n^{(c_1+c_2)\slash 2})$, {which leads to a contradiction to the hypothesis that $f_{\beta_1}(n)\sim cn^{c_2}$.} So $f_{\beta_1}(n)\ge f_{\beta_2}(n-sn')$.

Since $n'=o(n)$ and $s\le k$, $n-sn'=n(1-o(1))$. So $f_{\beta_2}(n-sn')\le f_{\beta_1}(n)\sim cn^{c_2}$ implies that $f_{\beta_2}(n)\leq (1+o(1))cn^{c_2}$. Further,
$$f_{\beta_1}(n-sn')\sim c[(1-o(1))n]^{c_2}=(1-o(1))cn^{c_2}=o(f_{\alpha_i}(n')).$$
Then by Eq.~(\ref{eq2}), $f_{\beta_2}(n)\ge f_{\beta_1}(n-sn')=(1-o(1))cn^{c_2}$. This means $ f_{\beta_2}(n)\sim cn^{c_2}$.
\end{proof}
}

\section{{A proof of Theorem~\ref{theorem_extend_k_t_oddtown}}}\label{sec_generalizations}


In this section we give a  proof of Theorem~\ref{theorem_extend_k_t_oddtown}. First we prove the   upper bound in Theorem~\ref{theorem_extend_k_t_oddtown}.


\begin{lemma}\label{lemma_upper_bound}
Let $k\geq 2t$ be positive integers. {For any  $\alpha\in \mathbb F_2^k$ with $t$ being the maximum integer such that $a_t=1$,} we have $f_{\alpha}(n)\le (1+o(1))(t! n)^{1\slash t}$.

\end{lemma}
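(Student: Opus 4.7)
The plan is to mimic the classical linear-algebra proof of the oddtown bound, but applied to $t$-wise intersections rather than to the sets themselves. Let $\mathcal F=\{F_1,\ldots,F_m\}$ be an $\alpha$-intersecting family with $m=f_\alpha(n)$. For every $t$-subset $S\subseteq[m]$, set $I_S=\bigcap_{i\in S}F_i$ and let $v_S\in\mathbb F_2^n$ be its characteristic vector. The aim is to prove that the $\binom{m}{t}$ vectors $\{v_S:S\in\binom{[m]}{t}\}$ are linearly independent in $\mathbb F_2^n$; once this is done, $\binom{m}{t}\le n$ and the asymptotic estimate drops out by routine manipulation.

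The key computation is the bilinear pairing $v_S\cdot v_{S'}=|I_S\cap I_{S'}|=|I_{S\cup S'}|\pmod{2}$. For $S=S'$ we have $|S\cup S'|=t$, so $v_S\cdot v_S\equiv a_t=1\pmod{2}$. For distinct $t$-subsets $S\neq S'$ of $[m]$ one has $t+1\le |S\cup S'|\le 2t$; the hypothesis $k\ge 2t$ is used precisely here to guarantee $|S\cup S'|\le k$, so the pattern constraint applies, and the maximality of $t$ as the largest index with $a_t=1$ forces $a_{|S\cup S'|}=0$. Hence $v_S\cdot v_{S'}\equiv 0\pmod{2}$. The usual diagonal argument---testing any vanishing combination $\sum_S c_S v_S=0$ against a fixed $v_{S_0}$ to conclude $c_{S_0}=0$---then yields linear independence, so $\binom{m}{t}\le n$.

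Expanding the binomial coefficient gives $m(m-1)\cdots(m-t+1)\le t!\,n$. Since $m\to\infty$ with $n$ (by Lemma~\ref{theorem_lower_infty} applied to the pattern $\alpha$), each factor $(1-i/m)$ with $i<t$ tends to $1$, so $m^{t}(1-o(1))\le t!\,n$; taking $t$-th roots yields the desired bound $m\le (1+o(1))(t!\,n)^{1/t}$.

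I do not expect a genuine obstacle. The only subtle point is recognizing that the hypothesis $k\ge 2t$ is used exactly to ensure $|S\cup S'|\le k$ for every pair of distinct $t$-subsets of $[m]$, so that the prescribed parity $a_{|S\cup S'|}=0$ controls the off-diagonal pairings $v_S\cdot v_{S'}$; without this, the orthogonality needed for linear independence would not be forced by $\alpha$, and the tight constant $t!$ would be out of reach.
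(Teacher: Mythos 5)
Your proof is correct and follows essentially the same approach as the paper: the paper first shows all $t$-wise intersections are distinct sets and then cites the oddtown bound $\lvert\mathcal F_t\rvert\le n$ as a black box, whereas you inline the underlying linear-algebra argument (the Gram matrix of the $v_S$ is the identity over $\mathbb F_2$, whence linear independence and $\binom{m}{t}\le n$); the key parity computations, the use of $k\ge 2t$, and the final asymptotic estimate are the same.
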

\begin{proof}
Let $\mathcal F\subseteq 2^{[n]}$ be an $\alpha$-intersecting family of size $f_\alpha(n)$. Let $\mathcal F_t$ be the set of all distinct $t$-wise intersections of $\mathcal F$, that is,
$$\mathcal F_t=\{F_1\cap F_2\cap \cdots \cap F_t: F_i\in \mathcal F, F_i\neq F_j, 1\leq i<j\leq t\}.$$

First, we claim that $|\mathcal F_t|=\binom {|\mathcal F|} t$. Otherwise, there are two different $t$-tuples $\{F_i, i\in [t]\}$ and $\{F_j', j\in [t]\}$ of distinct members in $\mathcal F$ such that $\bigcap_{i\in [t]}F_i= \bigcap_{j\in [t]}F_j'$. Since $\mathcal F$ is $\alpha$-intersecting and $a_t=1$,

$$|(\bigcap_{i\in [t]}F_i)\cap (\bigcap_{j\in [t]}F_j')|= |\bigcap_{i\in [t]}F_i|\equiv 1\mod 2.$$
But $(\bigcap_{i\in [t]}F_i)\cap (\bigcap_{j\in [t]}F_j')$ is the intersection of at least $t+1$ distinct members in $\mathcal F$, which contradicts to the fact that $a_i=0$ for all $i\geq t+1$. 

Then, since $\mathcal F$ is $\alpha$-intersecting with $a_t=1$ and $a_i=0$ for $t+1\leq i\leq 2t\leq k$, it is easy to check that $\mathcal F_t\subseteq 2^{[n]}$ is a $(1, 0)$-intersecting family, which means
$$|\mathcal F_t|\le f_{(1, 0)}(n) =n$$by the oddtown property.
So $\binom {f_\alpha(n)} t= |\mathcal F_t|\le n$ and $f_\alpha(n)\le (1+o(1))(t! n)^{1\slash t}$.
\end{proof}

We mention that the condition $k\geq 2t$ is necessary in the proof of Lemma~\ref{lemma_upper_bound}, otherwise it is not true to get an oddtown family. We recall Theorem~\ref{theorem_extend_k_t_oddtown} as follows.
\begin{thmbis}{theorem_extend_k_t_oddtown}
Let $k\geq 2t$ be positive integers. For any  $\alpha\in \mathbb F_2^k$ with $t$ being the maximum integer such that $a_t=1$, we have
$$f_\alpha(n)\sim (t! n)^{1\slash t}.$$
%
\end{thmbis}
\begin{proof}
We prove the theorem for the special pattern $\epsilon_t\in\mathbb F_2^{k}$ first.
By Corollary~\ref{cor_first_class_good}, $f_{\epsilon_t}(n)\ge (1-o(1))(t! n)^{1\slash t}$.
By Lemma~\ref{lemma_upper_bound}, $f_{\epsilon_t}(n)\le (1+o(1))(t! n)^{1\slash t}$. Hence, $f_{\epsilon_t}(n)\sim (t! n)^{1\slash t}$ when $k\geq 2t$.

For any pattern $\alpha\in \mathbb F_2^k$ with $t$ being the maximum integer such that $a_t=1$, we know that $\alpha-\epsilon_t\in \langle\epsilon_1, \ldots, \epsilon_{t-1}\rangle$. By  Corollary~\ref{cor_first_class_good},  $f_{\epsilon_i}(n)=\Omega (n^{1\slash (t-1)})$ for each $i\in [t-1]$.
Then applying the substitution lemma with $f_{\epsilon_t}(n)\sim (t! n)^{1\slash t}$, we have $f_\alpha(n) \sim (t! n)^{1\slash t}$.
\end{proof}

\section{A Proof of Theorem~\ref{theorem_main}}\label{sec_main_result}

In this section, we give a proof of  Theorem~\ref{theorem_main}. We first handle a special case when the length $k$ is a power of $2$.

\begin{theorem}\label{theorem_pre_main}
Let $k=2^{\ell+1}$ for some positive integer $\ell$. For any pattern $\alpha=(a_1, \ldots, a_k)\in\mathbb F_2^k$, the followings hold.
\begin{itemize}
\item[(1)] If $a_{2^\ell}\ne a_{2^{\ell+1}}$, $f_\alpha(n)\sim (2^\ell! n)^{1\slash 2^\ell}$.
\item[(2)] Otherwise, $f_\alpha(n)=\Omega(n^{1\slash(2^\ell-1)})$.
\end{itemize}
\end{theorem}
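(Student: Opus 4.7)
The plan is to set $t=2^\ell$ (so $k=2t$) and to combine the two constructions of Section~\ref{sec:two-fam} with the asymptotic lemmas of Section~\ref{sec_preliminary}; as a preparatory step, Theorem~\ref{theorem_extend_k_t_oddtown} applied to $\epsilon_t$ itself already yields $f_{\epsilon_t}(n)\sim (t!\,n)^{1/t}$, since $\epsilon_t$ has its unique nonzero coordinate at position $t$ and $2t\le k$. Viewed inside $\mathbb F_2^{2t}$, the available ``large'' building blocks are $\epsilon_1,\ldots,\epsilon_{t-1}$, each satisfying $f_{\epsilon_j}(n)=\Omega(n^{1/(t-1)})$ by Corollary~\ref{cor_first_class_good}, together with $\dot{\epsilon}_{1,t},\ldots,\dot{\epsilon}_{t,t}$, each satisfying $f_{\dot{\epsilon}_{j,t}}(n)=\Omega(n^{1/(t-1)})$ by Corollary~\ref{cor_second_class_good} with $s=\ell$. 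Using $\dot{\epsilon}_{j,t}=\epsilon_j+\epsilon_{j+t}$ for $j\in[t-1]$ and $\dot{\epsilon}_{t,t}=\epsilon_t+\epsilon_{2t}$, a direct check shows that $\{\epsilon_j,\dot{\epsilon}_{j,t}:j\in[t-1]\}$ is linearly independent and spans $V_0=\{v\in\mathbb F_2^{2t}:v_t=v_{2t}=0\}$, while adjoining $\dot{\epsilon}_{t,t}$ produces a linearly independent family spanning $V=\{v\in\mathbb F_2^{2t}:v_t=v_{2t}\}$.

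For Case~(1), first suppose $a_t=1$ and $a_{2t}=0$. Then $\alpha+\epsilon_t$ vanishes at positions $t$ and $2t$, so it lies in $V_0$. Applying the Substitution Lemma with $\beta_1=\epsilon_t$, $\beta_2=\alpha$, the large vectors $\{\epsilon_j,\dot{\epsilon}_{j,t}:j\in[t-1]\}$, and exponents $c_2=1/t<c_1=1/(t-1)$ yields $f_\alpha(n)\sim(t!\,n)^{1/t}$. In the remaining sub-case $a_t=0$, $a_{2t}=1$, the pattern $\alpha$ contains both bits, so $\alpha\notin\{\bm 0,\bm 1\}$, and the Dual Lemma reduces the problem to $\alpha+\bm 1$, which falls into the sub-case just handled.

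For Case~(2), the hypothesis $a_t=a_{2t}$ is exactly the condition for $\alpha\in V$. Since $V$ is spanned by the linearly independent family $\{\epsilon_j:j\in[t-1]\}\cup\{\dot{\epsilon}_{j,t}:j\in[t]\}$, and each of these generators has $f$-value $\Omega(n^{1/(t-1)})$, the Lower Bound Lemma (Lemma~\ref{lemma_f_upper_bound}) immediately gives $f_\alpha(n)=\Omega(n^{1/(2^\ell-1)})$. The main obstacle is isolating the correct dichotomy and matching it with the available constructions: what makes $k=2\cdot 2^\ell$ special is precisely that Corollary~\ref{cor_second_class_good} with $s=\ell$ produces $\dot{\epsilon}$-intersecting patterns of period exactly $t$, whose span inside $\mathbb F_2^{2t}$ is the codimension-one subspace $V$ cut out by $v_t=v_{2t}$. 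Once this span structure is in place, the Substitution and Lower Bound Lemmas do the rest.
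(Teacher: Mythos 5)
Your proof is correct and follows essentially the same strategy as the paper: identify a family of building-block vectors with known $f$-values spanning the relevant subspace, then invoke the Substitution Lemma (for case~(1)) and the Lower Bound Lemma (for case~(2)). The only cosmetic differences are that you span $V=\{v:v_t=v_{2t}\}$ with $\{\epsilon_j:j\in[t-1]\}\cup\{\dot{\epsilon}_{j,t}:j\in[t]\}$ whereas the paper uses $\{\epsilon_j:j\in[t-1]\}\cup\{\dot{\epsilon}_{j,t}:j\in[t-1]\}\cup\{\bm 1\}$ (equivalent, since $\bm 1=\sum_{j\in[t]}\dot{\epsilon}_{j,t}$), and you split case~(1) into two sub-cases handled via the Dual Lemma rather than observing directly that $\alpha-\epsilon_t\in V$ whenever $a_t\neq a_{2t}$.
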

\begin{proof}
Consider the following $k$ patterns of length $k$:
$$\text{$\epsilon_i$ for $i\in [2^\ell]$, $\dot{\epsilon}_{j, 2^\ell}$ for $j\in [2^\ell-1]$, and the all-one vector $\bm 1$.}$$
It is easy to check that those $k$ patterns exactly form a basis of $\mathbb F_2^k$.


By  Corollary~\ref{cor_first_class_good}, $f_{\epsilon_i}(n)=\Omega(n^{1\slash i})=\Omega(n^{1\slash (2^\ell-1)})$ for any $i\in [2^\ell-1]$. By  Corollary~\ref{cor_second_class_good}, $f_{\dot{\epsilon}_{j, 2^\ell}}(n)=\Omega(n^{1\slash (2^\ell-1)})$ for any $j\in [2^\ell-1]$. Further, it is well known that $f_{\bm 1}(n)=\Omega(2^{n\slash 2})=\Omega(n^{1\slash (2^\ell-1)})$. By Theorem~\ref{theorem_extend_k_t_oddtown}, we have $f_{\epsilon_{2^\ell}}(n)\sim (2^\ell! n)^{1\slash 2^\ell}$.

For any $\alpha\in \mathbb F_2^k$ satisfying $a_{2^\ell}\ne a_{2^{\ell+1}}$, it is easy to check that
$$\alpha-\epsilon_{2^\ell}\in\langle\epsilon_1, \ldots, \epsilon_{2^\ell-1}, \dot{\epsilon}_{1, 2^\ell}, \ldots, \dot{\epsilon}_{2^\ell-1, 2^\ell}, \bm 1 \rangle.$$
Then by the substitution lemma, $f_\alpha(n)\sim f_{\epsilon_{2^\ell}}(n)\sim (2^\ell! n)^{1\slash 2^\ell}$.

Otherwise, $a_{2^\ell}= a_{2^{\ell+1}}$. Then
$$\alpha\in\langle\epsilon_1, \ldots, \epsilon_{2^\ell-1}, \dot{\epsilon}_{1, 2^\ell}, \ldots, \dot{\epsilon}_{2^\ell-1, 2^\ell}, \bm 1 \rangle.$$
From the lower bound lemma, i.e., Lemma~\ref{lemma_f_upper_bound}, $f_\alpha(n)=\Omega(n^{1\slash(2^\ell-1)})$.
\end{proof}

{By  Theorem~\ref{theorem_pre_main} (1), it is immediate to have the following consequence, which confirms Conjecture~\ref{conjecture_1}  when $\mathcal A_1=\cdots =\mathcal A_k$ and $k$ is a power of $2$.

\begin{corollary}\label{cor-conje}
  Let $k=2^{\ell+1}$ for some positive integer $\ell$ and let $t\in [2^\ell,k-1]$. For  $\alpha=(1,1,\ldots, 1, 0,\ldots, 0)\in \mathbb F_2^k$ with the first $t$ coordinates being one,
 we have
$$f_\alpha(n)\sim (2^\ell! n)^{1\slash 2^\ell}.$$
\end{corollary}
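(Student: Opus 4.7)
The plan is very short because the corollary is immediate from Theorem~\ref{theorem_pre_main}(1); essentially all that is required is to verify the hypothesis $a_{2^\ell}\ne a_{2^{\ell+1}}$ for the specified $\alpha$.

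First I would unpack the definition of $\alpha$. By assumption $\alpha=(a_1,\ldots,a_k)\in\mathbb F_2^k$ with $a_i=1$ for $i\in[1,t]$ and $a_i=0$ for $i\in[t+1,k]$. Since $t\in[2^\ell, k-1]$ and $k=2^{\ell+1}$, the index $2^\ell$ lies in $[1,t]$, so $a_{2^\ell}=1$, while the index $2^{\ell+1}=k$ lies in $[t+1,k]$, so $a_{2^{\ell+1}}=0$. Hence $a_{2^\ell}\ne a_{2^{\ell+1}}$.

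Then I would invoke Theorem~\ref{theorem_pre_main}(1) directly to conclude $f_\alpha(n)\sim (2^\ell!\, n)^{1/2^\ell}$, which is the claim. No substitution-lemma bookkeeping or new construction is needed: the entire content of the corollary is the observation that the family of patterns handled by part (1) of Theorem~\ref{theorem_pre_main} includes, in particular, every initial-segment-of-ones pattern of length $k=2^{\ell+1}$ with between $2^\ell$ and $k-1$ ones. There is no real obstacle; the only thing to watch is the boundary case $t=2^\ell$ (making sure $a_{2^\ell}=1$ still holds, which it does because the interval $[1,t]$ contains $2^\ell$) and the exclusion $t\le k-1$ (which guarantees $a_{2^{\ell+1}}=0$ rather than $1$).

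Finally, I would remark that this gives exactly the special case of Conjecture~\ref{conjecture_1} stated in the paper when $\mathcal A_1=\cdots=\mathcal A_k$ and $k$ is a power of $2$: the $(k,t+1)$-oddtown type pattern with $t>k/2=2^\ell$ has $\lfloor k/2\rfloor=2^\ell$, and the corollary yields the matching upper bound $f_\alpha(n)=O(n^{1/\lfloor k/2\rfloor})$ predicted by the conjecture, together with the sharp constant $(2^\ell!)^{1/2^\ell}$.
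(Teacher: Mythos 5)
Your proof is correct and follows exactly the route the paper itself takes: Corollary~\ref{cor-conje} is stated as an immediate consequence of Theorem~\ref{theorem_pre_main}(1), and your verification that $t\in[2^\ell,k-1]$ forces $a_{2^\ell}=1\ne 0=a_{2^{\ell+1}}$ is precisely what makes that theorem applicable.
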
}


Now we prove Theorem~\ref{theorem_main}.

\begin{thmbis}{theorem_main}
For any positive integers $\ell$ and $k\geq 2$, the number of patterns $\alpha\in\mathbb F_2^k$ such that $f_{\alpha}(n)=\Omega(n^{1\slash (2^\ell -1)})$ is no more than $2^{2^{\ell+1}-1}$. For all other patterns not satisfying this property, $f_{\alpha}(n)=O(n^{1\slash 2^\ell})$.
\end{thmbis}
\begin{proof}
If $k\le 2^{\ell+1}-1$, there is nothing to prove because the number of distinct length $k$ patterns does not exceed $2^{2^{\ell+1}-1}$. If {$k= 2^{\ell+1}$}, it has been proved by Theorem~\ref{theorem_pre_main}. So we only need to consider the case when {$k> 2^{\ell+1}$}.  Let $B$ be the set of all patterns $\alpha\in \mathbb F_2^k$  satisfying $f_{\alpha}(n)=O(n^{1\slash 2^\ell})$, and let $G= \mathbb F_2^k\backslash B$.


First, we show that $|G|\le 2^{2^{\ell+1}-1}$. For any pattern $\alpha=(a_1, \ldots, a_k)\in G$, we claim that $a_j= a_{j+2^\ell}$ for all $j\in [2^\ell, k-2^\ell]$. Otherwise, if there exists some $j\in [2^\ell, k-2^\ell]$ such that $a_j\neq a_{j+2^\ell}$, we consider the substring $\beta=(a_{j-2^\ell+1}, a_{j-2^\ell+2}, \ldots, a_{j+2^\ell})$ of length $2^{\ell+1}$. We know the $2^\ell$-th position and the $2^{\ell+1}$-th position of $\beta$ have different values (since $a_j\neq a_{j+2^\ell}$), so $f_\beta(n)=\Theta(n^{1\slash 2^\ell})$ by Theorem~\ref{theorem_pre_main} (1). Then by the asymptotic trace lemma, $f_\alpha(n)=O(n^{1\slash 2^\ell})$, which implies $\alpha\in B$ {and leads to a contradiction since $\alpha\in G$.}
As a result, for any $\alpha \in G$, the first $2^{\ell+1}-1$ values in $\alpha$ are enough to define the whole $\alpha$, and the rest part of $\alpha$ repeats the substring $(a_{2^\ell}, a_{2^\ell+1}, \ldots, a_{2^{\ell+1}-1})$ over and over again. So $|G|\le 2^{2^{\ell+1}-1}$.

Next, we show that any $\alpha$ having the above recursive form of period $2^\ell$  satisfies $f_{\alpha}(n)=\Omega(n^{1\slash (2^\ell -1)})$, and hence in $G$.
As in the  proof of Theorem~\ref{theorem_pre_main}, we consider the following $2^{\ell+1}-1$ special independent patterns of length $k$:
$$\text{$\epsilon_i$ for $i\in [2^\ell-1]$ and $\dot{\epsilon}_{j, 2^\ell}$ for $j\in [2^\ell]$.}$$
It is easy to check that the considered $\alpha$  satisfies
$$\alpha\in\langle\epsilon_1, \ldots, \epsilon_{2^\ell-1}, \dot{\epsilon}_{1, 2^\ell}, \ldots, \dot{\epsilon}_{2^\ell, 2^\ell}\rangle.$$
In fact, if we define three binary vectors from $\alpha$ of length $2^\ell$: $\alpha_1=(a_1, \ldots, a_{2^\ell})$, $\alpha_2= (a_{2^\ell+1}, \ldots, a_{2^{\ell+1}})$, and $\alpha_3=\alpha_1-\alpha_2$, and let $P, P'\subset [2^\ell]$ be the sets of nonzero positions of $\alpha_3$ and $\alpha_2$, respectively, we have
$$\alpha=\sum_{i\in P}\epsilon_i+\sum_{j\in P'}\dot{\epsilon}_{j, 2^\ell}.$$
By Corollary~\ref{cor_first_class_good}, Corollary~\ref{cor_second_class_good},
  and the lower bound lemma, Lemma~\ref{lemma_f_upper_bound}, we have $f_{\alpha}(n)=\Omega(n^{1\slash (2^\ell -1)})$.
\end{proof}

{
\begin{remark}\label{rmk_equal}
From the proof, we see that when $k\geq 2^{\ell+1}$, the number of  patterns $\alpha\in\mathbb F_2^k$ such that $f_{\alpha}(n)=\Omega(n^{1\slash (2^\ell -1)})$ is exactly $2^{2^{\ell+1}-1}$.
\end{remark}}

\section{conclusion}\label{sec_conclude}

We considered the asymptotic behavior of  the maximum size of an $\alpha$-intersecting family with $\alpha\in\mathbb F_2^k$, which has restrictions on all $\ell$-wise intersections for $\ell\leq k$. This is a much more general problem originated from the classical oddtown and eventown problems.
We first gave two basic constructions of $\mathcal F_{s, i}(n)$ and $\overline{\mathcal F}_{s, i}(n)$, which have good intersection patterns under certain choices of $s$ and $i$. They are used as building blocks in the concatenation  method to obtain families of nearly optimal size.
Combining with several generalizations of the trace lemma and the partition sum lemma, we determine the asymptotic values of $f_{\alpha}(n)$ for many patterns $\alpha$ satisfying certain restrictions (Theorem~\ref{theorem_extend_k_t_oddtown}), and upper bound the values of $f_{\alpha}(n)$ for almost all patterns $\alpha$ with a small constant exponent on $n$ (Theorem~\ref{theorem_main}). Our results completely answer an open question  in \cite{Johnston2023} and partially confirm a conjecture in \cite{o2022note}.


We leave several open problems below for further study.

\begin{itemize}
\item[(1)] We have answered Question~\ref{question_1} by saying that $C=2^7$ is enough. But we have also pointed out that $2^7$ might not be the  best.
\begin{problem}
Find the best possible $C$ for Question~\ref{question_1}.
\end{problem}

\item[(2)] There are still many patterns $\alpha$ for which the asymptotic behaviors  of $f_\alpha(n)$ are not known, even for very short $\alpha$. For example, we know when $\alpha=(0, 0, 0, 0, 1)$, $O(n^{1\slash 2})=f_\alpha(n)= \Omega(n^{1\slash 3})$, but we do not know the exact exponent. One possible way is to try to explore more properties of the two basic families $\mathcal F_{s, i}(n)$ and $\overline{\mathcal F}_{s, i}(n)$ for more parameters, since in this work, we only used a very little fraction of parameters (see Lemma~\ref{lemma_first_class_good} and Lemma~\ref{lemma_second_class_good}).

\begin{problem}
For any given length $k$, find asymptotic results of $f_\alpha(n)$ for all $\alpha\in\mathbb F_2^k$.
\end{problem}

%
\item[(3)]
We have smoothly generalized Theorem~\ref{theorem_extend_k_t_oddtown} and Theorem~\ref{theorem_main} to the modulo $p$ setting when $\alpha\in \mathbb F_p^k$, which are Theorem~\ref{theorem_k_reachable_2} and Theorem~\ref{theorem_main_2}, respectively. Theorem~\ref{theorem_extend_k_t_oddtown} can also be extended to $\alpha\in\left\{\mathbb F_p\cup\{\star\}\right\}^k$ in the modulo $p$ setting, which is Theorem~\ref{theorem_k_reachable_2_star}. Here remains the last piece.
\begin{problem}
Find the analog of Theorem~\ref{theorem_main} in the modulo $p$ case when $\alpha\in \left\{\mathbb F_p\cup\{\star\}\right\}^k$.
\end{problem}
\end{itemize}



\appendices
\section{Modulo $p$ intersecting family of pattern $\alpha$}\label{sec_appendix}
For any nonnegative integer $s$ and { prime} $p$, denote $\varphi_p(s)$ as the number of $p$-factors in $s!$.  Under the modulo $p$ setting, the two families defined in Construction~\ref{construction_building_block} can be extended by just replacing $2$ by $p$ and $\varphi_p(s)$ by $\varphi_p(s)$. We denote the two new families by $\mathcal F_{s, i}(p, n)$ and
$\overline{\mathcal F}_{s, i}(p, n)$ respectively. For any $i \in [0, p^{\varphi_p(s)+1}-1]$, the size $m$ of $\mathcal F_{(s, i)}(p, n)$ and $\overline{\mathcal F}_{(s, i)}(p, n)$ also satisfies $m \sim (s! n)^{1\slash s}$ when $n$ goes to infinity.

Similar to Lemma~\ref{lemma_first_class_good} and Lemma~\ref{lemma_second_class_good}, we have the following two lemmas. The proof of Lemma~\ref{lemma_first_class_good_2} is almost the same as Lemma~\ref{lemma_first_class_good} thus omitted.

\begin{lemma}\label{lemma_first_class_good_2}
For any prime $p$, positive integers $s$ and $n$, if $|\mathcal F_{s, s-1}(p, n)|=m\ge s+1$, then $\mathcal F_{s, s-1}(p, n)$ is $\epsilon_s$-intersecting modulo $p$ with $\epsilon_s\in \mathbb F_p^m$. Hence $f_{\epsilon_s}(p,n)\geq (1-o(1))(s! n)^{1\slash s}$  with $\epsilon_s\in \mathbb F_p^k$.
\end{lemma}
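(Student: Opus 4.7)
The plan is to mirror the proof of Lemma~\ref{lemma_first_class_good} essentially verbatim, since the only role played by the prime $2$ in that argument is as the modulus in the counting of $p$-adic valuations. Take any $\ell$ distinct members $E_{j_1}, \ldots, E_{j_\ell}$ of $\mathcal F_{s,s-1}(p,n)$. Directly from the definition $E_j = \{e\in X : j\in e\}$, we have
\[
\Bigl|\bigcap_{x\in[\ell]} E_{j_x}\Bigr| = \bigl|\{e\in X : \{j_1,\ldots,j_\ell\}\subseteq e\}\bigr| = \binom{m-\ell}{s-\ell}.
\]
For $\ell>s$ this equals $0$, and for $\ell = s$ it equals $1$, matching the $s$-th coordinate of $\epsilon_s$. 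So the only content is the case $1\le \ell<s$, where we must show $\binom{m-\ell}{s-\ell}\equiv 0\pmod p$.

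For that case, I would write $\binom{m-\ell}{s-\ell} = (m-\ell)(m-\ell-1)\cdots(m-s+1)/(s-\ell)!$ and compare $p$-adic valuations of numerator and denominator. Since $(s-\ell)!$ divides $s!$, the denominator contributes at most $\varphi_p(s)$ factors of $p$. The numerator is a product of $s-\ell$ consecutive integers whose smallest term is $m-s+1$; by the congruence condition in Construction~\ref{construction_building_block} (with $i=s-1$ and modulus $p^{\varphi_p(s)+1}$), we have $m\equiv s-1 \pmod{p^{\varphi_p(s)+1}}$, hence $m-s+1\equiv 0 \pmod{p^{\varphi_p(s)+1}}$. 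Therefore the numerator alone carries at least $\varphi_p(s)+1$ factors of $p$, which strictly exceeds the valuation of the denominator, giving $p\mid \binom{m-\ell}{s-\ell}$ as required. This establishes that $\mathcal F_{s,s-1}(p,n)$ is $\epsilon_s$-intersecting modulo $p$.

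For the stated lower bound on $f_{\epsilon_s}(p,n)$, note that by construction $|\mathcal F_{s,s-1}(p,n)| = m$ where $m$ is the largest integer with $\binom{m}{s}\le n$ and $m\equiv s-1\pmod{p^{\varphi_p(s)+1}}$. The modular constraint shifts $m$ by at most a bounded constant (depending on $s$ and $p$ but not on $n$) from the unconstrained maximum, so $m \sim (s!n)^{1/s}$ as $n\to\infty$, and hence $f_{\epsilon_s}(p,n)\ge (1-o(1))(s!n)^{1/s}$. The only potential obstacle I anticipate is the bookkeeping hypothesis $m\ge s+1$, which is needed so that all the required $\ell$-tuples actually exist and the formula $\binom{m-\ell}{s-\ell}$ is applied in its intended regime; this is automatic once $n$ is sufficiently large. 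Since no new ideas beyond the modulo $2$ argument are required, I do not foresee any substantive difficulty.
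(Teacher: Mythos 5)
Your proposal is correct and follows exactly the argument the paper intends: it reproduces the proof of Lemma~\ref{lemma_first_class_good} with $2$ replaced by $p$ and $\varphi$ by $\varphi_p$, which is precisely what the paper does (it explicitly omits this proof as "almost the same as Lemma~\ref{lemma_first_class_good}"). Your one small improvement over the mod-$2$ write-up is pinpointing the factor $m-s+1\equiv 0\pmod{p^{\varphi_p(s)+1}}$ in the numerator, which makes the valuation count fully explicit; everything else, including the asymptotics $m\sim(s!n)^{1/s}$ and the role of the hypothesis $m\ge s+1$, matches the paper.
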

%

\begin{lemma}\label{lemma_second_class_good_2}
For any prime $p$,  positive integer $s$ and $i\in [0, p^{s}-1]$, if $|\overline{\mathcal F}_{p^s-1, i}(p, n)|=m\geq p^s$, then it is
$\dot{\epsilon}_{i+1, p^s}$-intersecting modulo $p$ with $\dot{\epsilon}_{i+1, p^s}\in\mathbb F_p^m$. Hence $f_{\dot{\epsilon}_{i, p^s}}(p,n)\geq (1-o(1))((p^s-1)! n)^{1\slash (p^s-1)}$  with $\dot{\epsilon}_{i, p^s}\in \mathbb F_p^k$ for any $i\in [p^{s}]$.
\end{lemma}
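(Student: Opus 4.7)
The plan is to mirror the proof of Lemma~\ref{lemma_second_class_good} with a refined number-theoretic ingredient. First I would compute that for any $\ell$ distinct members $\overline{E}_{j_1},\ldots,\overline{E}_{j_\ell}$ in $\overline{\mathcal F}_{p^s-1,i}(p,n)$, the $\ell$-wise intersection is exactly the collection of $(p^s-1)$-subsets of $[m]$ disjoint from $\{j_1,\ldots,j_\ell\}$, so its size equals $\binom{m-\ell}{p^s-1}$. The claim then reduces to evaluating this binomial coefficient modulo $p$ under the constraint $m\equiv i\pmod{p^s}$, which holds because the defining congruence $m\equiv i\pmod{p^{\varphi_p(p^s-1)+1}}$ refines it (using $\varphi_p(p^s-1)\geq s-1$ from Legendre's formula).

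Next, instead of the direct $p$-factor counting that sufficed when $p=2$ (where being coprime to $2$ is automatically being $\equiv 1\pmod 2$), I would invoke Lucas' theorem. Since $p^s-1$ has base-$p$ expansion consisting of $s$ copies of $p-1$, Lucas gives
\[
\binom{N}{p^s-1}\equiv \prod_{j=0}^{s-1}\binom{N_j}{p-1}\pmod p,
\]
where $N=\sum_j N_j p^j$ is the base-$p$ expansion. This product is nonzero modulo $p$ iff $N_j=p-1$ for each $j=0,\dots,s-1$, i.e., iff $N\equiv -1\pmod{p^s}$, and in that case it equals $1$ modulo $p$. Applying this with $N=m-\ell$ and using $m\equiv i\pmod{p^s}$, we obtain $\binom{m-\ell}{p^s-1}\equiv 1\pmod p$ exactly when $\ell\equiv i+1\pmod{p^s}$, and $\equiv 0\pmod p$ otherwise, which is precisely the $\dot{\epsilon}_{i+1,p^s}$ pattern. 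An edge case to check is that when $m-\ell<p^s-1$ the binomial vanishes; but then $\ell>m-p^s+1$, and the congruence $m\equiv i\pmod{p^s}$ forces $\ell\not\equiv i+1\pmod{p^s}$ in this range, so the required pattern value is also $0$.

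Finally, the lower bound $f_{\dot{\epsilon}_{i,p^s}}(p,n)\geq (1-o(1))((p^s-1)!\,n)^{1/(p^s-1)}$ for any $i\in[p^s]$ follows by taking the family $\overline{\mathcal F}_{p^s-1,i-1}(p,n)$: the index $i-1\in[0,p^s-1]$ lies in the allowed range, the resulting family is $\dot{\epsilon}_{i,p^s}$-intersecting by the first part, and its size $m$ satisfies $m\sim ((p^s-1)!\,n)^{1/(p^s-1)}$ from the standard estimate $\binom{m}{p^s-1}\leq n < \binom{m+p^{\varphi_p(p^s-1)+1}}{p^s-1}$. Realizing $\dot{\epsilon}_{i,p^s}$ as a pattern in $\mathbb F_p^k$ of any longer ambient length $k$ is automatic, since the extra coordinates only restrict $\ell$-wise intersections for $\ell$ exceeding the family size, which is vacuous.

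The main obstacle relative to the mod $2$ setting is precisely the need to pin down the residue $1\pmod p$, rather than the much weaker statement of being a $p$-adic unit. Lucas' theorem resolves this cleanly; equivalently, one may write each numerator factor as $m-\ell-p^s+1+j=p^{v_p(j)}u_j$ and check that $u_j\equiv j/p^{v_p(j)}\pmod p$ so that the unit parts of numerator and denominator cancel term by term, but Lucas' theorem packages this bookkeeping into a one-line application.
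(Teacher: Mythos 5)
Your proof is correct, and it takes a genuinely different route from the paper. The paper's argument has two branches: for $\ell\not\equiv i+1\pmod{p^s}$ it reuses the mod-$2$ style $p$-adic valuation count (locating a factor in the numerator with strictly larger $p$-adic valuation than its counterpart in $(p^s-1)!$), and for $\ell\equiv i+1\pmod{p^s}$ it proves an auxiliary claim that $\binom{m'}{j}\equiv 0\pmod p$ whenever $m'\equiv j-1\pmod{p^s}$ with $j\in[p^s-1]$, then telescopes with Pascal's identity $\binom{m-\ell}{p^s-1}=\sum_{j=1}^{p^s-1}\binom{m-\ell-j}{p^s-j}+\binom{m-\ell-p^s+1}{0}$ to isolate the residue $1$. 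Your Lucas'-theorem computation replaces both branches with a single calculation: since $p^s-1$ has base-$p$ digits all equal to $p-1$, the product $\prod_{j<s}\binom{N_j}{p-1}$ is $1$ when $N\equiv -1\pmod{p^s}$ and $0$ otherwise, which directly delivers the $\dot{\epsilon}_{i+1,p^s}$ pattern. Your approach is shorter and more transparent, at the cost of invoking Lucas' theorem; the paper's approach is elementary and self-contained, staying within the $p$-factor bookkeeping it already developed for the mod-$2$ case. Your edge-case remark about $m-\ell<p^s-1$ is in fact already absorbed by Lucas' theorem (such $N$ cannot have all low-order digits equal to $p-1$), and your index shift to $\overline{\mathcal F}_{p^s-1,i-1}(p,n)$ together with the size estimate for the lower bound match the paper.
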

\begin{proof}
For any $\ell$ distinct members $\overline{E}_{j_1}, \ldots \overline{E}_{j_\ell}$ in $\overline{\mathcal F}_{p^s-1, i}(p, n)$, it has been calculated that $|\bigcap_{x\in[\ell]}\overline{E}_{j_x}| = \binom{m-\ell}{p^s-1}$. 
Recall that $m\equiv i\mod p^{\varphi_p(p^s-1)+1}$ and hence $m\equiv i\mod p^{s}$.

If $\ell\not\equiv i+1\mod p^s$, the argument is the same as that in Lemma~\ref{lemma_second_class_good}, which gives $\binom{m-s}{p^\ell-1}\equiv 0\mod p$.


Consider the case $\ell\equiv i+1\mod p^s$. First, we claim that for any $j\in [p^s-1]$ and $m'>j$ with $m'\equiv j-1 \mod p^s$, we have $\binom {m'} j\equiv 0\mod p$.  Write $\binom {m'} j$ as the form of $\left(\prod_{x\in [j]}(m'-j+x)\right)\slash j!$. Since $m'\equiv j-1\mod p^s$, $m'-j+x$ has the same number of $p$-factors as $x-1$ when $x\in [2, j]$, while $(m'-j+1)$ has more $p$-factors than $j$. Hence $\binom {m'} j\equiv 0\mod p$.

By repeatedly use the equation $\binom N k=\binom {N-1} k+\binom {N-1}{k-1}$ for any integers $N\ge k\ge 1$, we have
$$\binom{m-\ell}{p^s-1}=\binom{m-\ell-1}{p^s-1}+ \binom {m-\ell-1}{p^s-2}=\cdots =\sum_{j=1}^{p^s-1}\binom{m-\ell-j}{p^s-j}+\binom {m-\ell-p^s+1} 0.$$
Since $m\equiv i\mod p^s$ and $\ell\equiv i+1\mod p^s$, for any $j\in [p^s-1]$, we have $m-\ell-j\equiv p^s- j-1\mod p^s$, and hence $\binom{m-\ell-j}{p^s-j}\equiv 0\mod p^s$. As a result, $\binom{m-\ell}{p^s-1}\equiv 1\mod p^s.$
\end{proof}

The followings are the modulo $p$ extensions of asymptotic trace lemma, lower bound lemma and substitution lemma. As the proving process goes much the same as in the modulo 2 setting, we leave the detailed proofs to readers.

\begin{lemma}[Asymptotic Trace Lemma]\label{lemma_asym_trace_2}
For $\alpha= (a_1, \ldots, a_k)\in \mathbb F_p^k$ of some given length $k$ and its subsequence $\beta=(a_{t+1}, \ldots, a_{t+r})\in \mathbb F_p^r$, as long as $a_{t+1}, \ldots, a_{t+r}$ are not all the same, $f_\alpha(p, n)=O(f_\beta(p, n))$.
\end{lemma}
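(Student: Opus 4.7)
The plan is to transplant the argument of Lemma~\ref{lemma_asym_trace} to the modulo $p$ setting with essentially no modification, using the $\mathbb F_p$ analogs of the tools already developed. The hypothesis that $a_{t+1},\ldots,a_{t+r}$ are not all the same replaces the mod~$2$ hypothesis $\beta\notin\{\bm 1_r,\bm 0_r\}$, and plays the same role: it guarantees the existence of a smallest index $s\in[2,r]$ with $a_{t+1}\ne a_{t+s}$.

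First I would note that $f_\alpha(p,n)\to\infty$ as $n\to\infty$. This is the mod~$p$ analog of Lemma~\ref{theorem_lower_infty}, and follows immediately by decomposing $\alpha$ along the basis $\{\epsilon_i\}$, applying Lemma~\ref{lemma_first_class_good_2} to each $f_{\epsilon_i}(p,n)$, and concatenating via the (mod~$p$) partition sum construction. In particular $f_\alpha(p,n)>t+k$ for all sufficiently large $n$, so we may pick a maximum $\alpha$-intersecting family $\mathcal F\subset 2^{[n]}$ of size $f_\alpha(p,n)$, fix $t$ distinct members $F_1,\ldots,F_t\in\mathcal F$, and set $T:=\bigcap_{i\in[t]}F_i$. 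The trace family
$$\mathcal F_T:=\{F\cap T:F\in\mathcal F\setminus\{F_1,\ldots,F_t\}\}$$
is then $\beta$-intersecting modulo $p$, once we control its cardinality.

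Next I would bound the multiplicity of traces. Let $s\in[2,r]$ be the smallest index with $a_{t+1}\ne a_{t+s}$; such an $s$ exists by hypothesis. I claim that no $s$ distinct members $F_{t+1},\ldots,F_{t+s}\in\mathcal F\setminus\{F_1,\ldots,F_t\}$ can satisfy $F_{t+1}\cap T=\cdots=F_{t+s}\cap T$. Indeed, if they did, then
$$\bigcap_{i\in[t+s]}F_i=\bigcap_{i\in[s]}(F_{t+i}\cap T)=F_{t+1}\cap T=\bigcap_{i\in[t+1]}F_i,$$
which, taking sizes modulo $p$, forces $a_{t+s}\equiv a_{t+1}\pmod p$, a contradiction.

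Consequently each trace in $\mathcal F_T$ is repeated at most $s-1$ times, so
$$f_\beta(p,n)\ge|\mathcal F_T|\ge\frac{f_\alpha(p,n)-t}{s-1},$$
and $f_\alpha(p,n)=O(f_\beta(p,n))$ follows since $t$ and $s$ are constants depending only on $\alpha$. There is no real obstacle here; the only mildly delicate point is the choice of $s$, where the mod~$p$ setting requires us to use ``not all equal'' rather than ``neither constantly $0$ nor constantly $1$'', but the ensuing inequality $a_{t+s}\not\equiv a_{t+1}\pmod p$ is exactly what the pigeonhole-on-traces argument needs.
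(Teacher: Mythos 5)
Your proposal is correct and follows exactly the route the paper intends: the paper omits the proof of Lemma~\ref{lemma_asym_trace_2} with the remark that it ``goes much the same as in the modulo 2 setting,'' and your argument is precisely that transplantation — establish $f_\alpha(p,n)\to\infty$ via the $\mathbb F_p$ analog of Lemma~\ref{theorem_lower_infty}, pass to the trace family $\mathcal F_T$, and bound multiplicities using the smallest $s$ with $a_{t+1}\ne a_{t+s}$. You also correctly identify the one place where the hypothesis must be restated (``not all equal'' in place of $\beta\notin\{\bm 0_r,\bm 1_r\}$) and why it suffices.
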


\begin{lemma}[Lower Bound Lemma]\label{lemma_f_upper_bound_2}
Let $\alpha_1, \ldots, \alpha_t\in\mathbb F_p^k$ be linearly independent vectors for some given length $k$. If there exists a constant $c>0$ such that $f_{\alpha_i}(p, n)=\Omega(n^c)$ for any $i\in [t]$, then for any $\alpha\in \langle\alpha_1, \ldots, \alpha_t\rangle$, we have $f_\alpha(p, n)=\Omega(n^c)$.
\end{lemma}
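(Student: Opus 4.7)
The plan is to mirror the modulo $2$ argument of Lemma~\ref{lemma_f_upper_bound}, with minor adjustments to accommodate the wider range of scalar coefficients available in $\mathbb F_p$. The essential tool is the modulo $p$ analogue of the concatenation construction underpinning Lemma~\ref{lemma_partition_sum}: if $\mathcal F_1, \ldots, \mathcal F_R$ are intersecting families of patterns $\beta_1, \ldots, \beta_R \in \mathbb F_p^k$ on pairwise disjoint ground sets $X_1, \ldots, X_R$, then the family $\{F_{i,1}\cup\cdots\cup F_{i,R} : 1\le i \le \min_r|\mathcal F_r|\}$ on $\bigcup_r X_r$ has pattern $\beta_1+\cdots+\beta_R$ in $\mathbb F_p^k$, since disjointness makes each $\ell$-wise intersection a disjoint union and therefore its size is the sum of the component sizes modulo $p$.

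First I would dispense with the case $\alpha=\bm 0$ by invoking the standard modulo $p$ eventown-type construction (for instance, all unions of a fixed partition of $[n]$ into $\lfloor n/p\rfloor$ blocks of size $p$), which already yields a $\bm 0$-intersecting family of exponential size and hence certainly of size $\Omega(n^c)$. For $\alpha\neq\bm 0$, the linear independence of $\alpha_1,\ldots,\alpha_t$ gives a unique expression
$$
\alpha=\sum_{j=1}^{s} r_j \alpha_{i_j}
$$
with $s\leq t$ and $r_j\in\mathbb F_p\setminus\{0\}$, which I identify with their integer representatives in $\{1,\ldots,p-1\}$.

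The key step is then to realize each summand $r_j\alpha_{i_j}$ by concatenating $r_j$ disjoint copies of an optimal $\alpha_{i_j}$-intersecting family; stacking the resulting $s$ blocks uses a total of $R:=\sum_{j=1}^{s}r_j\leq k(p-1)$ pairwise disjoint ground sets, each allocated size $\lfloor n/R\rfloor$. The overall concatenation is an $\alpha$-intersecting family of size
$$
\min_{j\in[s]} f_{\alpha_{i_j}}\bigl(p,\lfloor n/R\rfloor\bigr),
$$
which by the hypothesis $f_{\alpha_{i_j}}(p,n)=\Omega(n^c)$ is itself $\Omega(n^c)$, because $R$ depends only on $p$ and $k$ and is independent of $n$. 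The only point that warrants care, rather than a serious obstacle, is verifying that the scaled concatenation indeed produces pattern $\sum_j r_j\alpha_{i_j}$ modulo $p$; this follows at once from the additive behavior of disjoint intersections, exactly as in the modulo $2$ concatenation argument.
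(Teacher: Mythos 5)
Your proof is correct and follows essentially the same route the paper intends (the paper omits the modulo $p$ proof, stating only that it "goes much the same as in the modulo $2$ setting"). The one genuinely new ingredient needed over $\mathbb F_p$ — realizing a scalar coefficient $r_j\in\{1,\ldots,p-1\}$ by concatenating $r_j$ disjoint copies of an $\alpha_{i_j}$-intersecting family, so that the total number of blocks $R=\sum_j r_j\le k(p-1)$ remains a constant — is exactly the adjustment required, and your handling of the $\alpha=\bm 0$ case via the mod-$p$ eventown blocks is also correct.
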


\begin{lemma}[Substitution Lemma]\label{lemma_substitude_2}
Let $t\le k$ be positive integers and $\alpha_1, \ldots, \alpha_t\in\mathbb F_p^k$ be linearly independent vectors satisfying $f_{\alpha_i}(p, n)=\Omega(n^{c_1})$ for some $c_1>0$. Suppose $\beta_1, \beta_2 \in\mathbb F_p^k$ are two vectors satisfying
$\beta_1-\beta_2\in \langle\alpha_1, \ldots, \alpha_t\rangle$. Then for some constants $0<c_2<c_1$ and $c>0$,
\begin{itemize}
\item[(1)] $f_{\beta_2}(p, n)= \Theta(n^{c_2})$ if $f_{\beta_1}(p, n) =\Theta(n^{c_2})$; and
\item[(2)] $f_{\beta_2}(p, n)\sim cn^{c_2}$ if $f_{\beta_1}(p, n)\sim cn^{c_2}$.
\end{itemize}
\end{lemma}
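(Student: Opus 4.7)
The plan is to mimic the proof of the modulo $2$ substitution lemma (Lemma~\ref{lemma_substitude}) with only the numerical parameters changed. First I would note that in the modulo $p$ setting the partition sum lemma still holds via the concatenation construction described in Section~\ref{sec_preliminary}: if $\mathcal F_1\subset 2^X$ and $\mathcal F_2\subset 2^{X'}$ are intersecting families on disjoint ground sets of patterns $\alpha,\beta\in\mathbb F_p^k$, then the family of pairwise unions is an intersecting family of pattern $\alpha+\beta$ modulo $p$ of size $\min\{|\mathcal F_1|,|\mathcal F_2|\}$. Consequently $f_{\alpha+\beta}(p,n)\ge \min\{f_\alpha(p,r),f_\beta(p,n-r)\}$ whenever both families have enough members, and Lemma~\ref{lemma_f_upper_bound_2} provides the required lower bound on $f_{\alpha_i}(p,n)$ to make the partition argument robust.

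Write $\beta_1-\beta_2$ uniquely as $\alpha_{i_1}+\cdots+\alpha_{i_s}$ using the linear independence of the $\alpha_i$'s over $\mathbb F_p$; then $\beta_1=\beta_2+\sum_{j=1}^s\alpha_{i_j}$ and $\beta_2=\beta_1+(p-1)\sum_{j=1}^s\alpha_{i_j}$, so each of $\beta_1,\beta_2$ is a sum of the other with at most $(p-1)s+1\le (p-1)k+1$ of the $\alpha_i$'s. Next I would pick an intermediate scale $n'=n^{(c_2/c_1+1)/2}$ so that $n'=o(n)$ and $f_{\alpha_{i_j}}(p,n')=\Omega(n^{(c_1+c_2)/2})$, i.e.\ asymptotically much larger than any $\Theta(n^{c_2})$ bound. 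Applying the partition sum lemma in the two directions gives
\begin{equation}\label{eq:subp1}
f_{\beta_1}(p,n)\ge \min\Bigl\{f_{\beta_2}(p,n-(p-1)sn'),\min_{j\in[s]} f_{\alpha_{i_j}}(p,n')\Bigr\},
\end{equation}
\begin{equation}\label{eq:subp2}
f_{\beta_2}(p,n)\ge \min\Bigl\{f_{\beta_1}(p,n-((p-1)^2 s+\ldots) n'),\min_{j\in[s]} f_{\alpha_{i_j}}(p,n')\Bigr\},
\end{equation}
where the exact coefficient of $n'$ in the second line only needs to be bounded by a constant depending on $k$ and $p$, so the argument $n-O(n')=n(1-o(1))$.

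For part (1), assume $f_{\beta_1}(p,n)=\Theta(n^{c_2})$. Since the $\alpha_i$-term in \eqref{eq:subp1} grows like $\Omega(n^{(c_1+c_2)/2})$ and strictly dominates $\Theta(n^{c_2})$, the minimum in \eqref{eq:subp1} is attained by the $\beta_2$-term, yielding $f_{\beta_2}(p,n-O(n'))\le f_{\beta_1}(p,n)=O(n^{c_2})$, hence $f_{\beta_2}(p,n)=O(n^{c_2})$ (after shifting $n\mapsto n+O(n')$, which only changes $n^{c_2}$ by a $1+o(1)$ factor). Symmetrically, \eqref{eq:subp2} forces the $\beta_1$-term to dominate, giving $f_{\beta_2}(p,n)\ge f_{\beta_1}(p,n-O(n'))=\Omega(n^{c_2})$. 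Combining yields $f_{\beta_2}(p,n)=\Theta(n^{c_2})$.

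For part (2), the identical argument with $f_{\beta_1}(p,n)\sim cn^{c_2}$ upgrades the $O$/$\Omega$ estimates to $(1\pm o(1))cn^{c_2}$, because $n-O(n')=n(1-o(1))$ and $[n(1-o(1))]^{c_2}=(1-o(1))n^{c_2}$. The main subtlety I expect is purely bookkeeping: one must verify that the additive correction coming from writing $\beta_2$ as $\beta_1$ plus a multiple (in $\mathbb F_p$) of the $\alpha_i$'s remains bounded by a function of $k$ and $p$ (not of $n$), so that $n-O(n')=n(1-o(1))$. Once this is confirmed, the argument goes through verbatim as in the proof of Lemma~\ref{lemma_substitude}, with Lemmas~\ref{lemma_first_class_good_2} and \ref{lemma_second_class_good_2} supplying the required $\Omega(n^{c_1})$ lower bounds for the $\alpha_i$'s, and Lemma~\ref{lemma_f_upper_bound_2} guaranteeing that all intermediate patterns lie in ranges where the partition sum lemma can be applied.
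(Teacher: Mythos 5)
Your proposal is correct and takes essentially the same approach the paper intends: the paper states that the modulo-$p$ lemmas follow the modulo-$2$ proofs verbatim and leaves the details to the reader, and you have correctly reconstructed that argument, including the one genuine wrinkle — that the coefficients in the decomposition $\beta_1-\beta_2 = \sum r_j\alpha_{i_j}$ now lie in $\{0,\ldots,p-1\}$ rather than $\{0,1\}$, so passing between $\beta_1$ and $\beta_2$ costs $O_{p,k}(1)$ concatenations rather than at most $k$, which only affects the implicit constants in $n-O(n')=n(1-o(1))$. The only cosmetic imprecision is that you initially write the decomposition as $\alpha_{i_1}+\cdots+\alpha_{i_s}$ with unit coefficients; to be fully correct one should write $\sum_j r_j\alpha_{i_j}$ and realize $r_j\alpha_{i_j}$ as $r_j$ concatenated copies of an $\alpha_{i_j}$-family, but your subsequent $(p-1)$-bookkeeping makes clear you understand this.
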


Next we come to estimating the upper bound for  ceratin $\alpha\in\mathbb F_p^k$. We will need the following celebrating Frankl-Wilson Theorem.
\begin{theorem}[\hspace{-0.01em}\cite{Frankl1981}]\label{theorem_franke_wilson}
Let $p$ be a prime, $L\subset \mathbb F_p$ be a set of residues modulo $p$, and $\mathcal F\subset 2^{[n]}$ be a family such that $|A|\notin L$ for any $A\in \mathcal F$, but $|A\cap B|\in L$ for any distinct $A, B\in\mathcal F$. Then
$$|\mathcal F|\le \sum_{s=0}^{|L|}\binom n s.$$
\end{theorem}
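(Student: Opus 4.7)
The plan is to invoke the classical polynomial method (a variant of the linear algebra method), which is the standard route for inclusion results of this flavor. For each $A\in\mathcal F$, let $v_A\in\{0,1\}^n$ denote its characteristic vector, and define the polynomial
$$f_A(x_1,\ldots,x_n)=\prod_{\ell\in L}\bigl(\langle v_A,x\rangle-\ell\bigr)\in\mathbb F_p[x_1,\ldots,x_n].$$
Here $\langle v_A,x\rangle=\sum_{i\in A}x_i$. First I would observe the key evaluation identity $f_A(v_B)=\prod_{\ell\in L}(|A\cap B|-\ell)\pmod p$. By hypothesis, $|A\cap B|\in L$ whenever $A\neq B$, so $f_A(v_B)=0$, while $|A|\notin L$ forces $f_A(v_A)\neq 0$ in $\mathbb F_p$.

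Next I would extract linear independence from this upper/lower triangular behavior: if $\sum_{A\in\mathcal F}c_A f_A\equiv 0$, then evaluating at $v_B$ kills every term except $B$ itself, yielding $c_B f_B(v_B)=0$ and hence $c_B=0$ for each $B\in\mathcal F$. Therefore the family $\{f_A:A\in\mathcal F\}$ is linearly independent in the $\mathbb F_p$-vector space of functions $\{0,1\}^n\to\mathbb F_p$ generated by polynomials of degree at most $|L|$.

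Finally I would reduce each $f_A$ modulo the relations $x_i^2=x_i$, which is valid because we only ever evaluate at $0/1$ vectors. This reduction preserves all evaluations at $v_B$, so linear independence is maintained, and it turns each $f_A$ into a multilinear polynomial of total degree at most $|L|$. The space of such multilinear polynomials on $n$ variables has dimension exactly $\sum_{s=0}^{|L|}\binom{n}{s}$, spanned by monomials $\prod_{i\in S}x_i$ over $S\subseteq[n]$ with $|S|\leq |L|$. Comparing $|\mathcal F|$ against this dimension gives the desired bound.

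The proof is short and the main conceptual step is recognizing how to package the intersection data into a single polynomial whose roots detect ``$A\neq B$''; the combinatorial content sits entirely in the evaluation identity. The only mild subtlety I anticipate is making sure one works in $\mathbb F_p$ consistently (so that $f_A(v_A)\neq 0$ uses $|A|\not\equiv \ell\pmod p$ for every $\ell\in L$, which follows verbatim from the hypothesis $|A|\notin L$ since $L\subset\mathbb F_p$), and invoking the $x_i^2=x_i$ reduction before counting dimensions rather than after — otherwise the naive degree-$|L|$ count would overshoot and not give the tight $\sum_{s=0}^{|L|}\binom{n}{s}$.
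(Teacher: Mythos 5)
The paper does not supply a proof of this theorem; it is quoted as a black box from Frankl and Wilson, so there is no in-paper argument to compare against. Your proposal is the standard polynomial-method proof of the nonuniform modular Frankl--Wilson bound, and it is correct: the evaluation identity $f_A(v_B)=\prod_{\ell\in L}(|A\cap B|-\ell)$ vanishes for $A\neq B$ and is nonzero for $A=B$ precisely because $\mathbb F_p$ is a field, the linear-independence extraction is the usual triangular argument (here actually diagonal, since every off-diagonal evaluation vanishes), and the reduction $x_i^2\mapsto x_i$ before counting dimensions is exactly the step needed to get $\sum_{s=0}^{|L|}\binom n s$ rather than the cruder $\binom{n+|L|}{|L|}$. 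No gaps.
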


\begin{lemma}\label{lemma_upper_2}
For any $\alpha=(a_1, \ldots, a_k)\in\mathbb F_p^k$ for some prime $p$ and length $k>0$ such that for some positive integer $t\le k\slash 2$, $a_t\ne a_{t+1}=\cdots =a_k$, we have
$$f_\alpha(p, n)\le (1+o(1))(t! n)^{1\slash t}.$$
\end{lemma}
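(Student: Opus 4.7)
The plan is to imitate the proof of Lemma~\ref{lemma_upper_bound} from the modulo $2$ case, with two adjustments: the distinctness argument uses the chain $a_{t+1}=\cdots=a_k$ to identify $\ell$-wise intersections for all $\ell\in[t+1,2t]$ with $a_{t+1}$, and the ``oddtown'' step is replaced by an application of the Frankl--Wilson theorem with a single-element forbidden set.

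First, let $\mathcal F\subseteq 2^{[n]}$ be an $\alpha$-intersecting family modulo $p$ of size $f_\alpha(p,n)$, and define
\[
\mathcal F_t=\bigl\{F_{i_1}\cap\cdots\cap F_{i_t}:F_{i_1},\ldots,F_{i_t}\in\mathcal F\text{ distinct}\bigr\}.
\]
I would first show that the map from unordered $t$-subsets of $\mathcal F$ to $\mathcal F_t$ is injective, so $|\mathcal F_t|=\binom{|\mathcal F|}{t}$. If two distinct $t$-tuples produced the same intersection $T$, then the union of the two tuples has size $\ell$ with $t+1\le \ell\le 2t\le k$, so $T$ is simultaneously a $t$-wise and an $\ell$-wise intersection of distinct members, giving $|T|\equiv a_t\pmod p$ and $|T|\equiv a_\ell\pmod p$. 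Since $a_{t+1}=\cdots=a_k$ and $\ell\in[t+1,k]$, we have $a_\ell=a_{t+1}$, which contradicts $a_t\ne a_{t+1}$.

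Second, I would verify that $\mathcal F_t\subseteq 2^{[n]}$ satisfies the hypotheses of Frankl--Wilson with $L=\{a_{t+1}\}\subset\mathbb F_p$: every $T\in\mathcal F_t$ has $|T|\equiv a_t\pmod p$, and since $a_t\ne a_{t+1}$ in $\mathbb F_p$, this gives $|T|\notin L$; for distinct $T,T'\in\mathcal F_t$ arising from $t$-subsets $S,S'$ of $\mathcal F$, the intersection $T\cap T'$ is the $\ell$-wise intersection over $S\cup S'$, where $\ell=|S\cup S'|\in[t+1,2t]\subseteq[t+1,k]$, so $|T\cap T'|\equiv a_\ell=a_{t+1}\in L\pmod p$. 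Applying Theorem~\ref{theorem_franke_wilson} with $|L|=1$ yields $|\mathcal F_t|\le \binom{n}{0}+\binom{n}{1}=n+1$.

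Combining the two steps gives $\binom{f_\alpha(p,n)}{t}\le n+1$, which rearranges to $f_\alpha(p,n)\le(1+o(1))(t!n)^{1/t}$ as $n\to\infty$. The only delicate point is the distinctness claim in step one, where it is essential that $k\ge 2t$ so that every $\ell$ that can arise as $|S\cup S'|$ lies in the range where $a_\ell$ is pinned to $a_{t+1}$; everything else is a direct transcription of the modulo $2$ argument, with the Frankl--Wilson bound playing the role of the oddtown inequality.
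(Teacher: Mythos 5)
Your proof is correct and follows the same route as the paper's: form $\mathcal F_t$, show the $t$-wise intersections are all distinct via the $a_t\ne a_{t+1}=\cdots=a_k$ constraint (using $2t\le k$), then apply Frankl--Wilson with a singleton $L$ to bound $|\mathcal F_t|\le n+1$. The paper compresses the first step by saying ``same analysis as Lemma~\ref{lemma_upper_bound}''; you have simply written out that analysis in full, with the correct generalization from parity to residues mod $p$.
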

\begin{proof}
Suppose $\mathcal F\subset 2^{[n]}$ is an $\alpha$-intersecting family modulo $p$ of size $f_\alpha(p, n)$. Then consider $\mathcal F_t$, which is the collection of all distinct $t$-wise intersections of $\mathcal F$.

As the same analysis as in Lemma~\ref{lemma_upper_bound}, since $a_t\ne a_{t+1}$, $|\mathcal F_t|=\binom{|F|} t$, and $\mathcal F_t$ is an $(a_t, a_{t+1})$-intersecting family modulo $p$.
Applying Theorem~\ref{theorem_franke_wilson} with $L=\{a_k\}$, we have $|\mathcal F_t|\le n+1$. Hence
$$f_\alpha(p, n)=|\mathcal F|\le (1+o(1))(t! n)^{1\slash t}.$$
\end{proof}

The dual lemma can also be extended to the modulo $p$ setting as follows. A new name is given because $\alpha+\bm 1$ is no longer the dual of $\alpha$ when $p> 2$.

\begin{lemma}[Shift Lemma]\label{lemma_shift}
For any $\alpha=(a_1, \ldots, a_k)\in \mathbb F_p^k$, $f_\alpha(p, n)=\Theta(f_{\alpha+\bm 1}(p, n))$. If further $a_1, \ldots, a_k$ are not all equal, $f_\alpha(p, n) \sim f_{\alpha+\bm 1}(p, n)$.
\end{lemma}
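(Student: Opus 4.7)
The plan is to adapt the mod $2$ Dual Lemma (Lemma~\ref{lemma_dual}) to the mod $p$ setting, where shifting by $\bm 1$ has order $p$ rather than $2$, so a single iteration no longer suffices to close the loop.

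First, I would establish the ``add-element'' construction. Given any $\alpha$-intersecting family $\mathcal F \subset 2^{[n]}$ modulo $p$, the family $\mathcal F' = \{F \cup \{n+1\}: F \in \mathcal F\} \subset 2^{[n+1]}$ satisfies
$$|F_{i_1}' \cap \cdots \cap F_{i_\ell}'| = |F_{i_1} \cap \cdots \cap F_{i_\ell}| + 1$$
for every $\ell \in [k]$, because the new symbol $n+1$ belongs to every member. Thus $\mathcal F'$ is $(\alpha + \bm 1)$-intersecting modulo $p$, yielding the basic inequality
$$f_{\alpha + \bm 1}(p, n+1) \ge f_\alpha(p, n).$$
Iterating this $p-1$ times (each iteration adds one element to the ground set and shifts the pattern by $\bm 1$) and using $p\cdot \bm 1 \equiv \bm 0 \pmod p$ yields
$$f_\alpha(p, n+p-1) \ge f_{\alpha + \bm 1}(p, n).$$
Combining the two inequalities gives the sandwich
$$f_{\alpha + \bm 1}(p, n-p+1) \;\le\; f_\alpha(p, n) \;\le\; f_{\alpha + \bm 1}(p, n+1).$$

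For the $\Theta$ conclusion I would verify that $f_{\alpha+\bm 1}(p,\cdot)$ changes by at most a constant multiplicative factor under an $O(1)$ shift in $n$. In the constant-vector case $\alpha = c\bm 1$, chaining the add-element construction shows $f_{c\bm 1}(p,n) = p^{\Theta(n)}$ for every $c \in \mathbb F_p$, so a shift by $p-1$ only contributes a bounded factor $p^{O(1)}$. In the non-constant case, polynomial boundedness of $f_{\alpha+\bm 1}(p,n)$ together with monotonicity gives the same kind of bounded-ratio regularity. Either way the sandwich yields $f_\alpha(p,n) = \Theta(f_{\alpha + \bm 1}(p, n))$.

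For the finer $\sim$ conclusion under the hypothesis that $a_1,\ldots,a_k$ are not all equal, I would extract from $\alpha + \bm 1$ a length-two subsequence $(a,b)$ with $a \ne b$, apply the asymptotic trace lemma (Lemma~\ref{lemma_asym_trace_2}) together with Frankl-Wilson (Theorem~\ref{theorem_franke_wilson}) / Lemma~\ref{lemma_upper_2} to obtain a polynomial upper bound $f_{\alpha+\bm 1}(p,n) = O(n)$, and pair it with the polynomial lower bound from Lemma~\ref{lemma_first_class_good_2}. The resulting leading-order asymptotic of the form $f_{\alpha+\bm 1}(p,n) \sim c\, n^{1/t}$ satisfies $f_{\alpha+\bm 1}(p, n \pm O(1)) \sim f_{\alpha+\bm 1}(p,n)$, so the sandwich pinches $f_\alpha(p,n) \sim f_{\alpha+\bm 1}(p,n)$. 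The main obstacle is precisely this regularity step: one must guarantee a true leading-order asymptotic rather than merely a $\Theta$-bound, which fails for constant vectors (the explicit reason for excluding that case from the $\sim$ statement) and must be justified for non-constant $\alpha$ from the polynomial asymptotics already proved in the appendix.
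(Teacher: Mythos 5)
Your plan matches the paper's proof in structure: you use the same add-element construction $\mathcal F \mapsto \{F\cup\{n+1\}: F\in\mathcal F\}$ giving $f_\alpha(p,n)\le f_{\alpha+\bm 1}(p,n+1)$, iterate it to obtain the chain $f_\alpha(p,n)\le f_{\alpha+\bm 1}(p,n+1)\le\cdots\le f_{\alpha+p\cdot\bm 1}(p,n+p)=f_\alpha(p,n+p)$, split into the constant and non-constant cases, and in the non-constant case apply the asymptotic trace lemma with Lemma~\ref{lemma_upper_2} (taking $t=1$, $k=2$) to get $f_\alpha(p,n)=O(n)$. The paper handles the constant case by observing that $\alpha+j\cdot\bm 1=\bm 0$ for some $j$ and comparing to $f_{\bm 0}(p,\cdot)$, whereas you cite exponential growth; these amount to the same thing.

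Where your plan goes wrong is in how you propose to close the $\sim$ conclusion. You say the regularity $f_{\alpha+\bm 1}(p,n\pm O(1))\sim f_{\alpha+\bm 1}(p,n)$ should follow from a leading-order asymptotic $f_{\alpha+\bm 1}(p,n)\sim c\,n^{1/t}$ extracted ``from the polynomial asymptotics already proved in the appendix.'' This is circular: the appendix result that would supply such an asymptotic, Theorem~\ref{theorem_k_reachable_2}, invokes the Shift Lemma at its very first step (``Otherwise we can use the shift lemma to reduce it to the $a_{t+1}=0$ case''). More fundamentally, a leading-order asymptotic of the form $c\,n^{1/t}$ is not available a priori for an arbitrary non-constant $\alpha+\bm 1$; propagating known asymptotics across shifts is precisely the job of the Shift Lemma, so one cannot assume such an asymptotic as input. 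The paper avoids this by deriving $f_\alpha(p,n)\sim f_\alpha(p,n+p)$ directly from the monotonicity of $f_\alpha(p,\cdot)$ and the crude bound $f_\alpha(p,n)=O(n)$, and then reading the $\sim$ off the chain $f_\alpha(p,n)\le f_{\alpha+\bm 1}(p,n+1)\le f_\alpha(p,n+p)$. You should replace the appeal to the appendix asymptotics with a self-contained regularity argument of this type, using only facts that do not depend on the Shift Lemma.
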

\begin{proof}
For any family $\mathcal F\subset 2^{[n]}$  which is $\alpha$-intersecting modulo $p$, the new family $\big\{F\cup\{n+1\}: F\in \mathcal F\big\}\subset 2^{[n+1]}$ is $(\alpha+\bm 1)$-intersecting modulo $p$. Hence
$$f_\alpha(p, n)\le f_{\alpha+\bm 1}(p, n+1)\le f_{\alpha+2\cdot\bm 1}(p, n+2)\le\cdots\le f_{\alpha+ p\cdot\bm 1}(p, n+p)= f_\alpha(p, n+p).$$
Here $i\cdot \bm 1$ is the all-$i$ vector.
By the definition, $f_\alpha(p,n)$ grows with $n$. If $a_1=\cdots =a_k$, there exists some $j\in [p]$ such that $\alpha+j\cdot\bm 1=\bm 0$, and hence $f_\alpha(p, n)=\Theta(f_{\bm 0}(p, n))= \Theta(f_{\alpha+\bm 1}(p, n))$. If $a_1, \ldots, a_k$ are not all equivalent, by Lemma~\ref{lemma_upper_2} with $t=1$, $k=2$ and asymptotic trace lemma, $f_\alpha(p, n)=O(n)$. Hence $f_\alpha(p, n)\sim f_\alpha(p, n+p)$ and $f_\alpha(p, n)\sim f_{\alpha+\bm 1}(p, n)$.
\end{proof}

The following is an analog of Theorem~\ref{theorem_extend_k_t_oddtown}. The form becomes a little more complicated because a nonzero value of any position in $\alpha$ has more than one choice. The proving strategy is much the same.
\begin{theorem}\label{theorem_k_reachable_2}
Let $p$ be a prime.
 For any positive $t$ and  pattern $\alpha=(a_1, \ldots, a_k)\in\mathbb F_p^{k}$ with $k\ge 2t$, if
  $a_t\ne a_{t+1}=\cdots =a_k$,
we have $f_\alpha(p, n) =\Theta(n^{1\slash t})$.
Moreover, if $a_t-a_{t+1}=1$, $f_\alpha(p, n) \sim (t! n)^{1\slash t}$.
\end{theorem}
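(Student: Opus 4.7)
The plan is to mirror the skeleton of Theorem~\ref{theorem_extend_k_t_oddtown} in the modulo-$p$ setting: first pin down the elementary pattern $\epsilon_t$, then bootstrap to a general $\alpha$ via the Shift Lemma together with the Substitution/Lower Bound Lemmas. The matching upper bound $f_\alpha(p,n)\le (1+o(1))(t!\,n)^{1/t}$ is immediate from Lemma~\ref{lemma_upper_2} under the hypothesis $a_t\ne a_{t+1}=\cdots=a_k$ with $k\ge 2t$, so the whole argument will concentrate on the lower bound. For the anchor, Lemma~\ref{lemma_first_class_good_2} supplies $f_{\epsilon_t}(p,n)\ge|\mathcal F_{t,t-1}(p,n)|\sim (t!\,n)^{1/t}$, which combined with the Lemma~\ref{lemma_upper_2} upper bound applied to $\alpha=\epsilon_t$ (valid since $a_t=1\ne 0=a_{t+1}$) yields $f_{\epsilon_t}(p,n)\sim (t!\,n)^{1/t}$.

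\textbf{The $\sim$ case ($a_t-a_{t+1}=1$).} I will set $\alpha':=\alpha-a_{t+1}\bm 1$, so that $a'_{t+1}=\cdots=a'_k=0$ and $a'_t=a_t-a_{t+1}=1$. Since $a_t\ne a_{t+1}$, every intermediate pattern $\alpha+j\bm 1$ has non-constant entries, so iterated application of the Shift Lemma (Lemma~\ref{lemma_shift}) gives $f_\alpha(p,n)\sim f_{\alpha'}(p,n)$. Next, $\alpha'-\epsilon_t$ is supported only on coordinates $1,\ldots,t-1$, hence lies in $\langle \epsilon_1,\ldots,\epsilon_{t-1}\rangle$; by Lemma~\ref{lemma_first_class_good_2} each $\epsilon_i$ with $i\le t-1$ satisfies $f_{\epsilon_i}(p,n)=\Omega(n^{1/i})=\Omega(n^{1/(t-1)})$. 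Part (2) of the Substitution Lemma (Lemma~\ref{lemma_substitude_2}), with $c_1=1/(t-1)$, $c_2=1/t$, and $c=(t!)^{1/t}$, then transfers the $\sim$ relation from $\epsilon_t$ to $\alpha'$, giving $f_\alpha(p,n)\sim (t!\,n)^{1/t}$.

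\textbf{The $\Theta$ case.} For general nonzero $c:=a_t-a_{t+1}\in\mathbb F_p$, the same shift sends $\alpha$ to $\alpha'$ with $a'_t=c$ and $a'_i=0$ for $i>t$, and the $\Theta$-half of the Shift Lemma gives $f_\alpha(p,n)=\Theta(f_{\alpha'}(p,n))$. The only genuinely new ingredient needed beyond the mod-$2$ proof is the estimate $f_{c\epsilon_t}(p,n)=\Omega(n^{1/t})$ for arbitrary nonzero $c$, which I will obtain by taking $c$ copies of the family $\mathcal F_{t,t-1}(p,\lfloor n/c\rfloor)$ on pairwise disjoint ground sets and concatenating them index-by-index. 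Each $\ell$-wise intersection of the concatenation is exactly $c$ times the corresponding intersection inside a single block, so the concatenated family is $c\epsilon_t$-intersecting of size $\sim(t!\,n/c)^{1/t}=\Theta(n^{1/t})$. With this in hand, $\alpha'$ lies in the span of the linearly independent set $\{c\epsilon_t,\epsilon_1,\ldots,\epsilon_{t-1}\}$, all of whose members satisfy $f\ge\Omega(n^{1/t})$, so the Lower Bound Lemma (Lemma~\ref{lemma_f_upper_bound_2}) produces $f_{\alpha'}(p,n)=\Omega(n^{1/t})$, completing the proof.

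\textbf{Where the friction lies.} The bulk of the argument is a direct translation of Section~\ref{sec_generalizations}; the one truly new point is the lower bound for $c\epsilon_t$ with $c\ne 1$, an issue invisible in $\mathbb F_2$ because the only nonzero scalar there is $1$. The $c$-fold disjoint concatenation handles it cleanly, but one must explicitly verify that intersections scale by $c$ modulo $p$ rather than merely adding residues incompatibly, which is automatic once the ground sets are disjoint and indices are aligned. Beyond that, the remaining work consists of checking that the Shift Lemma applies at every step (guaranteed by $a_t\ne a_{t+1}$ being preserved under shift) and that the linear-independence hypothesis of the Substitution and Lower Bound Lemmas holds for the specific bases $\{\epsilon_1,\ldots,\epsilon_{t-1}\}$ and $\{c\epsilon_t,\epsilon_1,\ldots,\epsilon_{t-1}\}$ chosen above.
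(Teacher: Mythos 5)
Your proposal is correct and follows essentially the same path as the paper: reduce to $a_{t+1}=0$ via the Shift Lemma (iterated as needed), anchor the argument at $\epsilon_t$ using Lemma~\ref{lemma_first_class_good_2} together with the upper bound of Lemma~\ref{lemma_upper_2}, and then transfer to $\alpha$ through the span $\langle\epsilon_1,\ldots,\epsilon_{t-1}\rangle$. The only cosmetic difference is in the $\Theta$-case: the paper first establishes $f_{a_t\epsilon_t}(p,n)=\Theta(n^{1/t})$ (concatenation for the lower bound, Lemma~\ref{lemma_upper_2} for the upper) and then invokes part~(1) of the Substitution Lemma, whereas you apply the Lower Bound Lemma to the basis $\{a_t\epsilon_t,\epsilon_1,\ldots,\epsilon_{t-1}\}$ and pair it with the Lemma~\ref{lemma_upper_2} upper bound directly; these are interchangeable since both encode the same disjoint-concatenation construction, and your explicit verification that $c$-fold concatenation produces a $c\epsilon_t$-intersecting family is exactly the content of the iterated partition sum lemma that the paper appeals to.
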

\begin{proof}
We only need to prove the case when $a_{t+1}=0$. Otherwise we can use the shift lemma to reduce it to the $a_{t+1}=0$ case.

As in the proof of Theorem~\ref{theorem_extend_k_t_oddtown}, consider $\epsilon_i\in\mathbb F_p^k$ for any $i\in [t]$. Then by Lemma~\ref{lemma_first_class_good_2}, $f_{\epsilon_i}(p, n)=\Omega(n^{1\slash i})$ for any $i\in [t]$, and specially $f_{\epsilon_t}(p, n)\ge (1-o(1))(t! n)^{1\slash t}$.
Since $k\ge 2t$, by Lemma~\ref{lemma_upper_2}, $f_{\epsilon_t}(p, n) \sim (t! n)^{1\slash t}$.

If $a_t=1$ for $\alpha$, it is easy to check that
$$\alpha-\epsilon_t\in \langle\epsilon_1, \ldots, \epsilon_{k-1}\rangle.$$
By Lemma~\ref{lemma_substitude_2} (2),  $f_\alpha(p, n)\sim f_{\epsilon_t}(p, n) \sim (t! n)^{1\slash t}$.

Otherwise, let $\beta=a_t\cdot \epsilon_t$, i.e., the $t$-th position of $\beta$ has value $a_t$ and other positions have common value $0$. Then by partition sum lemma, $f_{\beta}(p, n)\ge f_{\epsilon_t}(p, n\slash a_t)=\Omega(n^{1\slash t})$, and by Lemma~\ref{lemma_upper_2}, $f_{\beta}(p, n)= \Theta(n^{1\slash t})$.

Finally, it is easy to check that
$$\alpha-\beta\in \langle\epsilon_1, \ldots, \epsilon_{k-1}\rangle.$$
From Lemma~\ref{lemma_substitude_2} (1), $f_\alpha(p, n) =\Theta(f_\beta(p, n))= \Theta(n^{1\slash t})$.
\end{proof}

Then comes the analogs of Theorem~\ref{theorem_pre_main} and Theorem~\ref{theorem_main}. The proof strategies are the same and hence we omit the proofs.
\begin{theorem}\label{theorem_pre_main_2}
Let $p$ be a prime, and let $k=2\cdot p^\ell$ for some positive integer $\ell$. For any pattern $\alpha=(a_1, \ldots, a_k)\in\mathbb F_p^k$, the followings hold.
\begin{itemize}
\item[(1)] If $a_{p^\ell}\ne a_{2\cdot p^\ell}$, $f_\alpha(p, n)= \Theta(n^{1\slash p^\ell})$.
\item[(2)] Otherwise, $f_\alpha(p, n)=\Omega(n^{1\slash(p^\ell-1)})$.
\end{itemize}
\end{theorem}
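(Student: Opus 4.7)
The proof should closely parallel that of Theorem~\ref{theorem_pre_main} via the modulo $p$ analogs of the building-block constructions and auxiliary lemmas. First I would consider the $k=2p^\ell$ vectors in $\mathbb F_p^k$ consisting of $\epsilon_i$ for $i\in[p^\ell]$, $\dot{\epsilon}_{j,p^\ell}$ for $j\in[p^\ell-1]$, and the all-one vector $\bm 1$. Reading off coordinates at positions $2p^\ell$, then $p^\ell+j$ for $j\in[p^\ell-1]$, then $p^\ell$, and finally $j\in[p^\ell-1]$ in turn shows that these $k$ vectors are linearly independent, hence form a basis of $\mathbb F_p^k$.

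Next I would record lower bounds for each basis element. By Lemma~\ref{lemma_first_class_good_2}, $f_{\epsilon_i}(p,n)=\Omega(n^{1\slash i})=\Omega(n^{1\slash(p^\ell-1)})$ for every $i\in[p^\ell-1]$, and Theorem~\ref{theorem_k_reachable_2} applied with $t=p^\ell$ and $k=2p^\ell$ yields the sharper estimate $f_{\epsilon_{p^\ell}}(p,n)\sim(p^\ell!\,n)^{1\slash p^\ell}$. By Lemma~\ref{lemma_second_class_good_2}, $f_{\dot{\epsilon}_{j,p^\ell}}(p,n)=\Omega(n^{1\slash(p^\ell-1)})$ for all $j\in[p^\ell-1]$. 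Finally, the shift lemma together with the classical eventown construction gives $f_{\bm 1}(p,n)$ of exponential order, hence certainly $\Omega(n^{1\slash(p^\ell-1)})$.

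Now for any $\alpha=(a_1,\ldots,a_k)\in\mathbb F_p^k$, I would expand $\alpha$ uniquely in this basis; a direct coefficient computation (solving sequentially for the coefficients at positions $2p^\ell$, $p^\ell+j$, $p^\ell$, $j$) shows that the coefficient of $\epsilon_{p^\ell}$ is exactly $d:=a_{p^\ell}-a_{2p^\ell}$. In case (1) where $d\neq 0$, I set $\beta:=d\,\epsilon_{p^\ell}$. Since $\beta_{p^\ell}=d\neq 0=\beta_{p^\ell+1}=\cdots=\beta_{2p^\ell}$ with $k=2p^\ell\geq 2p^\ell$, Theorem~\ref{theorem_k_reachable_2} gives $f_\beta(p,n)=\Theta(n^{1\slash p^\ell})$. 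The decomposition tells us that $\alpha-\beta$ lies in the subspace spanned by the remaining basis vectors $\epsilon_1,\ldots,\epsilon_{p^\ell-1},\dot{\epsilon}_{1,p^\ell},\ldots,\dot{\epsilon}_{p^\ell-1,p^\ell},\bm 1$, each of which has $f$-value of order $\Omega(n^{1\slash(p^\ell-1)})$, which strictly dominates $n^{1\slash p^\ell}$. Applying the substitution lemma (Lemma~\ref{lemma_substitude_2}(1)) with $c_1=1\slash(p^\ell-1)$ and $c_2=1\slash p^\ell$ then transfers the $\Theta$-estimate from $\beta$ to $\alpha$.

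In case (2) where $a_{p^\ell}=a_{2p^\ell}$, the coefficient $d$ vanishes, so $\alpha$ itself already lies in the span of the same subset of basis vectors, all of which have $f$-value $\Omega(n^{1\slash(p^\ell-1)})$. The lower bound lemma (Lemma~\ref{lemma_f_upper_bound_2}) immediately delivers $f_\alpha(p,n)=\Omega(n^{1\slash(p^\ell-1)})$. The only genuinely new technical ingredient is the linear algebra over $\mathbb F_p$ needed to verify the basis and to extract the key identity $d=a_{p^\ell}-a_{2p^\ell}$ that produces the clean dichotomy between the two cases; once this is in hand, the rest is a mechanical translation of the mod $2$ argument, using the modulo $p$ versions of the substitution and lower bound lemmas already established in the appendix.
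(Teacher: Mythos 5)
Your proposal is correct and follows exactly the strategy the paper intends (the paper explicitly omits the proof, saying only that the strategy mirrors that of Theorem~\ref{theorem_pre_main}). You correctly adapt the mod $2$ argument by choosing the basis $\{\epsilon_i\}_{i\in[p^\ell]}\cup\{\dot\epsilon_{j,p^\ell}\}_{j\in[p^\ell-1]}\cup\{\bm 1\}$, identifying the coefficient of $\epsilon_{p^\ell}$ in the expansion of $\alpha$ as $a_{p^\ell}-a_{2p^\ell}$, handling the non-unit scalar $d$ by taking $\beta=d\,\epsilon_{p^\ell}$ and invoking the $\Theta$-part of Theorem~\ref{theorem_k_reachable_2}, and then applying the substitution and lower-bound lemmas — all as the paper's Theorem~\ref{theorem_pre_main} proof does in the mod $2$ case.
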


\begin{theorem}\label{theorem_main_2}
Let $p$ be a prime. For any positive integers $k$ and $\ell$, the number of patterns $\alpha\in\mathbb F_p^k$ such that $f_{\alpha}(p, n)=\Omega(n^{1\slash (p^\ell -1)})$ is no more than $p^{2\cdot p^\ell -1}$. For all other patterns not satisfying this property, $f_{\alpha}(p, n)=O(n^{1\slash p^\ell})$.
\end{theorem}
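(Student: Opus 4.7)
The plan is to mirror the proof of Theorem~\ref{theorem_main}, substituting the appropriate modulo $p$ ingredients (Theorem~\ref{theorem_pre_main_2}, Lemma~\ref{lemma_first_class_good_2}, Lemma~\ref{lemma_second_class_good_2}, Lemma~\ref{lemma_asym_trace_2}, Lemma~\ref{lemma_f_upper_bound_2}) wherever their $\mathbb F_2$ analogs were used, with the parameter $2^\ell$ replaced by $p^\ell$. First I would dispose of the boundary regimes. When $k\le 2p^\ell-1$, there is nothing to prove since $|\mathbb F_p^k|\le p^{2p^\ell-1}$. When $k=2p^\ell$, Theorem~\ref{theorem_pre_main_2} already gives the dichotomy, and the patterns satisfying $a_{p^\ell}=a_{2p^\ell}$ number exactly $p^{2p^\ell-1}$.

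The main work occurs for $k>2p^\ell$. Define $B:=\{\alpha\in\mathbb F_p^k:f_\alpha(p,n)=O(n^{1\slash p^\ell})\}$ and $G:=\mathbb F_p^k\setminus B$. To bound $|G|$, I would show that every $\alpha=(a_1,\ldots,a_k)\in G$ satisfies $a_j=a_{j+p^\ell}$ for all $j\in[p^\ell,k-p^\ell]$. Assuming the contrary, pick such $j$ with $a_j\ne a_{j+p^\ell}$ and form the substring $\beta=(a_{j-p^\ell+1},\ldots,a_{j+p^\ell})$ of length $2p^\ell$; its $p^\ell$-th and $2p^\ell$-th entries disagree, so Theorem~\ref{theorem_pre_main_2}(1) yields $f_\beta(p,n)=\Theta(n^{1\slash p^\ell})$. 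Since $\beta$ is non-constant, the asymptotic trace lemma (Lemma~\ref{lemma_asym_trace_2}) gives $f_\alpha(p,n)=O(f_\beta(p,n))=O(n^{1\slash p^\ell})$, contradicting $\alpha\in G$. Consequently any $\alpha\in G$ is determined by its initial segment $(a_1,\ldots,a_{2p^\ell-1})$, since the tail from position $p^\ell$ onwards is forced to repeat with period $p^\ell$; this gives $|G|\le p^{2p^\ell-1}$.

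For the matching lower bound, I would exhibit a collection of $2p^\ell-1$ patterns whose $\mathbb F_p$-span contains every $\alpha$ with the above $p^\ell$-periodic tail, namely
$$\epsilon_i\;(i\in[p^\ell-1]),\qquad \dot\epsilon_{j,p^\ell}\;(j\in[p^\ell]).$$
Linear independence and the fact that they span the full $p^\ell$-periodic subspace both follow by the same positional-bookkeeping as in Theorem~\ref{theorem_main}: the coefficients $d_j$ on the $\dot\epsilon_{j,p^\ell}$ are read off from positions $p^\ell,p^\ell+1,\ldots,2p^\ell-1$ (possible because $k\ge 2p^\ell$), and then the $c_i$ on the $\epsilon_i$ are recovered from positions $1,\ldots,p^\ell-1$. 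Lemma~\ref{lemma_first_class_good_2} gives $f_{\epsilon_i}(p,n)=\Omega(n^{1\slash i})=\Omega(n^{1\slash(p^\ell-1)})$ for $i\in[p^\ell-1]$, and Lemma~\ref{lemma_second_class_good_2} gives $f_{\dot\epsilon_{j,p^\ell}}(p,n)=\Omega(n^{1\slash(p^\ell-1)})$ for $j\in[p^\ell]$. The lower bound lemma (Lemma~\ref{lemma_f_upper_bound_2}) then propagates the estimate $\Omega(n^{1\slash(p^\ell-1)})$ to every $\alpha$ in the span, completing the reverse inclusion.

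The only place where real care is needed is the linear-independence/spanning claim in the last step, because over $\mathbb F_p$ the relevant vectors are no longer a basis of $\mathbb F_p^{2p^\ell-1}$ in the naive sense they were in the binary proof; I expect this bookkeeping (and checking $k\ge 2p^\ell$ is used exactly where needed to recover all $d_j$) to be the only subtle point. Everything else is a direct transcription of the Theorem~\ref{theorem_main} argument with $2\leftrightarrow p$ and $2^\ell\leftrightarrow p^\ell$.
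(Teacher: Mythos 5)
Your proposal is correct and is exactly the route the paper intends: the paper explicitly omits this proof, remarking only that ``the proof strategies are the same'' as Theorem~\ref{theorem_main}, and you have faithfully carried out that transcription, correctly identifying the $2p^\ell-1$ spanning vectors $\epsilon_i$ ($i\in[p^\ell-1]$) and $\dot\epsilon_{j,p^\ell}$ ($j\in[p^\ell]$) and the positional argument for independence and spanning. The ``subtle point'' you flag at the end is in fact unproblematic --- the positional bookkeeping (recover $d_j$ from positions $p^\ell,\ldots,2p^\ell-1$, then $c_i$ from positions $1,\ldots,p^\ell-1$) is characteristic-free, so it works over $\mathbb F_p$ verbatim.
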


Similarly, the number of patterns $\alpha\in\mathbb F_p^k$ such that $f_{\alpha}(p, n)=\Omega(n^{1\slash (p^\ell -1)})$ is exactly $p^{2\cdot p^\ell -1}$ when $k\ge 2\cdot p^\ell$.

Finally, we consider the modulo $p$ result allowing $\alpha_i$ to be $\star$. We still have a similar upper bound as Lemma~\ref{lemma_upper_2}.

\begin{lemma}\label{lemma_upper_2_star}
For any $\alpha=(a_1, \ldots, a_k)\in (\mathbb F_p\cup\{\star\})^k$ for some prime $p$ and positive integer $t\le k\slash 2$ such that $a_t\ne 0$ and $ a_{t+1}=\cdots = a_k=0$, we have
$$f_\alpha(p, n)\le (1+o(1))(t! n)^{1\slash t}.$$
\end{lemma}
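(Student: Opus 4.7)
The plan is to mimic the proof of Lemma~\ref{lemma_upper_2} and observe that the Frankl--Wilson theorem applied with $L=\{0\}$ still does the job once the symbol $\star$ is read as ``nonzero modulo $p$''. Given an $\alpha$-intersecting family $\mathcal F\subset 2^{[n]}$ modulo $p$ of maximal size, I would introduce
$$\mathcal F_t=\{F_1\cap\cdots\cap F_t:F_1,\ldots,F_t\in\mathcal F\text{ pairwise distinct}\}$$
and aim to bound $|\mathcal F_t|$ via Theorem~\ref{theorem_franke_wilson}.

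First I would show that $|\mathcal F_t|=\binom{|\mathcal F|}{t}$, that is, distinct unordered $t$-subsets of $\mathcal F$ produce distinct intersections. If two different $t$-subsets $S\neq S'$ yielded a common intersection $I$, then $I=\bigcap_{F\in S\cup S'}F$ would be an intersection of $|S\cup S'|\in[t+1,2t]$ distinct members; since $2t\le k$ and $a_{t+1}=\cdots=a_k=0$, this forces $|I|\equiv 0\pmod p$. But $|I|$ is also a $t$-wise intersection, so either $|I|\equiv a_t\not\equiv 0\pmod p$ (when $a_t\in\mathbb F_p$) or $|I|\not\equiv 0\pmod p$ (when $a_t=\star$); either case contradicts the previous conclusion.

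Next I would apply Theorem~\ref{theorem_franke_wilson} to $\mathcal F_t$ with $L=\{0\}$. Every $A\in\mathcal F_t$ is a $t$-wise intersection and $a_t\neq 0$, so $|A|\bmod p\notin L$. For distinct $A,B\in\mathcal F_t$, the same indexing-set argument expresses $A\cap B$ as a $\ge(t+1)$-wise intersection of members of $\mathcal F$, giving $|A\cap B|\equiv 0\pmod p$ and hence in $L$. The theorem therefore supplies $|\mathcal F_t|\le\binom{n}{0}+\binom{n}{1}=n+1$, and rearranging $\binom{f_\alpha(p,n)}{t}\le n+1$ yields $f_\alpha(p,n)\le(1+o(1))(t!n)^{1/t}$. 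The only step where the $\star$-symbol demands individual attention is the injectivity argument, but the crucial property used there is that $\star$ excludes the residue $0$, which is precisely what the Frankl--Wilson application also needs, so no new obstacle arises.
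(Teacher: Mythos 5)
Your proof is correct and takes essentially the same approach as the paper: the paper itself only remarks that the proof ``is the same as Lemma~\ref{lemma_upper_2}, and the point is to consider the $(\star, 0)$-intersecting family $\mathcal F_t$ after given the extremal family $\mathcal F$, and then to apply Theorem~\ref{theorem_franke_wilson},'' which is precisely the injectivity argument plus the Frankl--Wilson application with $L=\{0\}$ that you carry out in detail.
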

The proof of Lemma~\ref{lemma_upper_2_star} is the same as Lemma~\ref{lemma_upper_2}, and the point is to consider the $(\star, 0)$-intersecting family $\mathcal F_t$ after given the extremal family $\mathcal F$, and then to apply Theorem~\ref{theorem_franke_wilson}. From Lemma~\ref{lemma_upper_2_star} and Theorem~\ref{theorem_k_reachable_2}, we have the following simple corollary, as an extension of Theorem~\ref{theorem_extend_k_t_oddtown}.

{
\begin{theorem}\label{theorem_k_reachable_2_star}
Let $p$ be a prime. For any  positive $\ell$ and any pattern $\alpha=(a_1, \ldots, a_k)\in \left\{\mathbb F_p\cup\{\star\}\right\}^k$ with $k\ge 2\ell$ such that $a_\ell\ne 0$ and $a_{\ell+1}=\cdots= a_k=0$,
we have $f_\alpha(p, n) = \Theta( n^{1\slash \ell})$. Moreover, if $a_\ell=1$ or $a_\ell=\star$, $f_\alpha(p, n) \sim (\ell! n)^{1\slash \ell}$.
\end{theorem}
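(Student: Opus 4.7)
The plan is to combine the upper bound supplied by Lemma~\ref{lemma_upper_2_star} with a lower bound obtained by reducing to the already-proven $\mathbb F_p$ result Theorem~\ref{theorem_k_reachable_2}. The hypotheses of Theorem~\ref{theorem_k_reachable_2_star}, namely $k\ge 2\ell$, $a_\ell\ne 0$ and $a_{\ell+1}=\cdots=a_k=0$, match the hypotheses of Lemma~\ref{lemma_upper_2_star} verbatim with $t=\ell$. Thus the lemma immediately yields $f_\alpha(p,n)\le (1+o(1))(\ell!\,n)^{1/\ell}$, which simultaneously takes care of the $O(n^{1/\ell})$ side in the $\Theta$-claim and the $\le (1+o(1))(\ell!\,n)^{1/\ell}$ side in the $\sim$-claim.

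For the matching lower bound, I will instantiate the $\star$ entries of $\alpha$ to the specific value $1$. Define $\alpha'=(a_1',\ldots,a_k')\in\mathbb F_p^k$ by $a_i'=a_i$ if $a_i\in\mathbb F_p$ and $a_i'=1$ if $a_i=\star$. Since positions $\ell+1,\ldots,k$ of $\alpha$ are already $0\in\mathbb F_p$, the pattern $\alpha'$ still satisfies $a_\ell'\ne 0$ and $a_{\ell+1}'=\cdots=a_k'=0$, so $\alpha'$ falls under the hypothesis of Theorem~\ref{theorem_k_reachable_2}. Crucially, every $\alpha'$-intersecting family modulo $p$ is automatically an $\alpha$-intersecting family modulo $p$: at non-$\star$ positions the two patterns agree, while at each $\star$ position the prescribed value $1$ is indeed a nonzero residue, which is exactly what $\star$ demands. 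Hence $f_\alpha(p,n)\ge f_{\alpha'}(p,n)$.

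Applying Theorem~\ref{theorem_k_reachable_2} to $\alpha'$ gives $f_{\alpha'}(p,n)=\Theta(n^{1/\ell})$ in all cases, proving $f_\alpha(p,n)=\Theta(n^{1/\ell})$ after combining with the upper bound. For the refined asymptotic, note that when $a_\ell=1$ or $a_\ell=\star$ we have $a_\ell'=1$, so $a_\ell'-a_{\ell+1}'=1-0=1$, and the ``moreover'' clause of Theorem~\ref{theorem_k_reachable_2} upgrades this to $f_{\alpha'}(p,n)\sim(\ell!\,n)^{1/\ell}$. Sandwiching with the Lemma~\ref{lemma_upper_2_star} upper bound yields $f_\alpha(p,n)\sim(\ell!\,n)^{1/\ell}$. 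There is no genuine obstacle in this argument; the only point to verify carefully is that replacing $\star$ by $1$ preserves both the structural shape of the pattern (nonzero at position $\ell$, zero thereafter) and the ``every $\alpha'$-intersecting family is $\alpha$-intersecting'' implication, both of which are immediate from the definition of $\star$.
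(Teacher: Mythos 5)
Your proposal is correct and matches the paper's intended approach: the paper explicitly presents this theorem as a "simple corollary" of Lemma~\ref{lemma_upper_2_star} (upper bound) and Theorem~\ref{theorem_k_reachable_2} (lower bound after a concrete instantiation of the $\star$ entries), and your filling-in of the $\star\mapsto 1$ reduction plus the inclusion $f_\alpha(p,n)\ge f_{\alpha'}(p,n)$ is exactly the routine step the paper leaves implicit.
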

}



\end{document}